\documentclass[pdflatex,sn-mathphys-num]{sn-jnl}


\usepackage{graphicx}%
\usepackage{multirow}%
\usepackage{amsmath,amssymb,amsfonts}%
\usepackage{amsthm}%
\usepackage{mathrsfs}%
\usepackage{xcolor}%
\usepackage{textcomp}%
\usepackage{manyfoot}%


\theoremstyle{thmstyleone}%
\newtheorem{theorem}{Theorem}
\newtheorem{corollary}[theorem]{Corollary}
\newtheorem{lemma}[theorem]{Lemma}
\newtheorem{proposition}[theorem]{Proposition}%

\theoremstyle{thmstyletwo}%
\newtheorem{remark}{Remark}%

\theoremstyle{thmstylethree}%
\newtheorem{definition}{Definition}%

\raggedbottom

\begin{document}

\title[Completeness of the space of absolutely and upper integrable functions with values in a semi-normed space]{Completeness of the space of absolutely and upper integrable functions with values in a semi-normed space}


\author*[1,2]{\fnm{Rodolfo} \sur{Maza}}\email{remaza@mapua.edu.ph}\email{remaza@up.edu.ph}

\affil*[1]{\orgdiv{Department of Mathematics, School of Foundational Studies and Education}, \orgname{Map\'ua University}, \orgaddress{\street{Muralla Street}, \city{Manila}, \postcode{1002}, \country{Philippines}}}

\affil[2]{\orgdiv{Institute of Mathematics}, \orgname{University of the Philippines - Diliman}, \orgaddress{\street{C.P. Garcia Avenue}, \city{Quezon City}, \postcode{1101}, \country{Philippines}}}


\abstract{This paper studies absolute integrability for functions with values in semi-
	normed spaces and in locally convex topological vector spaces (LCTVS). We 
	introduce an \emph{upper-integral} approach (based on a $\rho$-variational 
	measure $\mu_{\rho}$) to define the spaces $\mathcal{U}^p_{\rho}$ of upper 
	integrable functions and investigate their functional-analytic properties.

	The main contributions are:
	\begin{itemize}
		\item the precise construction of the $\rho$-upper-integrability spaces 
		$\mathcal{U}^p_{\rho}(A;X)$ (and their Fr\'echet analogues), together with the 
		natural semi-norms $\|\cdot\|_{\mathcal{U}^p_{\rho}}$;
		\item measure-style inequalities adapted to the variational measure 
		$\mu_{\rho}$ (monotone continuity for ascending sets, Fatou-type lemma, and 
		Chebyshev inequality) within the $\rho$-upper-integral framework;
		\item functional-analytic results: sequential completeness of 
		$\mathcal{U}^p_{\rho}([a,b];X)$ when $X$ is sequentially complete (semi-normed 
		case), and sequential completeness of $\mathcal{U}^p([a,b];X)$ when $X$ is a 
		sequentially complete Fr\'echet space; and
		\item the closedness of the absolutely integrable subspace 
		$L^p_{\rho}([a,b];X)$ inside $\mathcal{U}^p_{\rho}([a,b];X)$ (hence 
		$L^p([a,b];X)$ is a closed Fr\'echet subspace of $\mathcal{U}^p([a,b];X)$ under 
		the usual hypotheses).
\end{itemize}
}

\keywords{Absolute Integrability, Completeness, Kurtzweil-Henstock Integral, Upper Integrability}



\maketitle

\section{Introduction}

This paper studies the absolute integrability of functions taking values in 
semi-normed spaces and locally convex topological vector spaces (LCTVS). We also
introduce an approach based on upper integrals that extends classical results 
from real-valued functions to the setting of LCTVS-valued functions.

Classical integrals such as Lebesgue, Bochner, and Pettis each face limitations when extended to semi-normed or general locally convex spaces \cite{DiestelUhl1977,MazaCanoy1,MazaCanoy2,MazaCanoy3,Rudin}. The Henstock-Kurzweil integral broadens the horizon, but still encounters restrictions in sequential completeness. The $\rho$-upper-integral introduced here addresses these gaps by combining measure-style flexibility with functional-analytic rigor.

We show that the space of absolutely integrable functions is a closed subspace 
within the space of upper integrable functions. Furthermore, we establish 
completeness results for the case of Fr\'echet spaces.

Absolute integrability coincides with Lebesgue integrability, which is a 
stronger notion than Kurzweil--Henstock integrability for real-valued functions 
(see, e.g., \cite{YeeVyborny}). A classical result in integration theory 
states that the space of Lebesgue integrable functions is complete and that 
simple functions provide a foundation for Lebesgue integration. It is natural to
ask whether an analogous result holds for LCTVS-valued functions. The work in 
\cite{Marraffa2} addressed this question for the variational McShane integral
by showing that it can be characterized as the completion of simple functions.

Our main objective is to explore fundamental properties of absolutely integrable
functions and to prove sequential completeness for the associated function 
spaces. To this end, we introduce the notion of an upper integral inspired by 
the $SL_\varphi$-integral in \cite{MazaCanoy4} and adapted from Thomson's 
definition for real-valued functions \cite{Thomson1,Thomson2,Thomson3},
using variational measures studied in \cite{MazaCanoy3}.

Throughout the paper, we follow standard notation and terminology in functional 
analysis and integration theory, referencing 
\cite{BogachevSmolyanov,MazaCanoy3,Rudin,Schaefer,YeeVyborny} for background material.

\section{Integration in Semi-Normed Spaces}

In this section, we introduce integration of functions on an interval with 
values in semi-normed spaces. Our treatment builds on results in 
\cite{MazaCanoy3} developed in the setting of locally convex topological vector 
spaces. Throughout this paper we consider functions
\[
f : \mathcal{I}[a,b] \times [a,b] \to X,
\]
where $\mathcal{I}[a,b]$ denotes the collection of non-degenerate closed 
subintervals of $[a,b]$. For the remainder of the paper we fix a semi-normed 
space $(X,\rho)$.

\begin{definition}[\(\delta\)-fine tagged partition]
	Let $\delta:[a,b]\to(0,\infty)$ be a gauge. A tagged partition
	$D=\{([u_i,v_i],t_i):1\le i\le n\}$ of $[a,b]$ is said to be 
	\emph{$\delta$--fine}
	if for each $i$ we have $t_i\in[u_i,v_i]$ and 
	$[u_i,v_i]\subseteq(t_i-\delta(t_i),t_i+\delta(t_i))$.
\end{definition}

\begin{definition}
	A function $f:[a,b]\to X$ is said to be \emph{$\rho$-SKH integrable} if 
	there exists a function $F:[a,b]\to X$ such that for every $\varepsilon>0$ there
	is a gauge $\delta$ on $[a,b]$ with the property that for every $\delta$-fine 
	tagged partition $D$ of $[a,b]$ one has
	\[
	(D)\sum \rho\!\big(F(v)-F(u)-f(t)(v-u)\big) < \varepsilon.
	\]
	We call $F$ a \emph{$\rho$-SKH primitive} of $f$. If $f \cdot 1_A$ is 
	$\rho$-SKH integrable for a subset $A\subseteq [a,b]$, we say that $f$ is 
	$\rho$-SKH integrable on $A$. The space of $\rho$-SKH integrable functions on 
	$A$ is denoted $SKH_{\rho}(A)$.
	
	Moreover, if $X$ is locally convex -- there is a separating family of 
	continuous semi-norms on \(X\), we say that $f:[a,b]\to X$ is \emph{strongly 
		Kurzweil--Henstock (SKH) integrable} if $f$ is $\rho$-SKH integrable for every 
	continuous semi-norm $\rho$ on $X$. In this case $F$ is called an SKH primitive 
	of $f$ and $F(b)-F(a)$ is the (unique) SKH integral of $f$.
\end{definition}

The notion of $\rho$-SKH integrability above coincides with the $\Phi_U$-SH 
integrability used in \cite{MazaCanoy3}, where $\Phi_U$ is the Minkowski 
functional of a balanced, convex neighbourhood $U$ of the origin. We use the 
semi-norm notation since our locally convex spaces are described via separating 
families of semi-norms.

Below we collect a few properties of SKH-integrable functions and address the 
non-uniqueness of the $\rho$-SKH primitive in the semi-normed setting.

An equivalent, more compact condition for integrability may be expressed using a
variant of function variation. From Thomson's works 
\cite{Thomson1,Thomson2,Thomson3}, the variational measure may be reformulated as
follows:

\begin{definition}
	Given a real-valued interval-point function 
	$h:\mathcal{I}[a,b]\times[a,b]\to \mathbb{R}$ and $E\subseteq[a,b]$,
	\begin{equation}
		\label{VariationalMeasure}
		\mu(h,E) \;=\; \inf_{\text{gauge }\delta} \; 
		\operatorname{Var}(h,E,\delta),
	\end{equation}
	where
	\[
	\operatorname{Var}(h,E,\delta)
	\;=\;
	\sup\Big\{ (D)\sum 1_E(t)\cdot (h)([u,v],t) :
	D\ \text{is }\delta\text{-fine} \Big\}.
	\]
\end{definition}

\begin{remark}
	\label{SKHIntegrability}
	Let $I$ be a compact interval. A function $f:I\to X$ is $\rho$-SKH 
	integrable if and only if there exists $F:[a,b]\to X$ such that
	\[
	\mu\big(\rho(\Delta F - \overline{f}),I\big)=0,
	\]
	where $\Delta F([u,v],t)=F(v)-F(u)$ and \(\overline{f}([u,v],t) = 
	f(t)(v-u)\).
\end{remark}

From the paper \cite{MazaCanoy3}, we have a useful variational measure for 
\(X\)-valued functions.

\begin{definition}
	The $\rho$-variation of a function $h:[a,b]\to X$ on a set 
	$E\subseteq[a,b]$ is defined by
	\[
	\label{rhoVariation}
	\mu^1_\rho(h,E) = \mu(\rho h,E).
	\]
\end{definition}

\begin{remark}
	\label{murho}
	The following facts follow from \cite{MazaCanoy3}: For any function 
	$h:[a,b]\to X$,
	\begin{enumerate}
		\item When $h([u,v],t)=t(v-u)$, the variational measure reduces 
		to the Lebesgue outer measure \(m^*\) restricted to $[a,b]$.
		\item For any $A,B\subseteq[a,b]$,
		\[
		\mu^1_\rho(h,A\cap B)
		= \mu^1_\rho\big(1_A\cdot h, B\big).
		\]
		\item The set function $E\mapsto \mu^1_\rho(h,E)$ is an outer 
		measure on $[a,b]$. We call it the $\rho$-variational measure induced by $h$.
		\item For fixed $E\subseteq[a,b]$, the mapping $f\mapsto 
		\mu^1_\rho(f,E)$ defines a seminorm on the vector space of functions \(f:[a,b] 
		\to X\) for which $\mu^1_\rho(f,E) < \infty$, that is, the following holds:
		\begin{enumerate}
			\item For any function \(f\) with \(\mu^1_\rho(f,E) < 
			\infty\) and real numbers \(\lambda\),
			\[\mu^1_\rho(\lambda f,E) \leq |\lambda| 
			\mu^1_\rho(f,E).\]
			\item For any function \(f\) and \(g\) with 
			\(\mu^1_\rho(f,E),\mu^1_\rho(g,E)<\infty\),
			\[\mu^1_\rho(f + g,E) \leq \mu^1_\rho(f,E) + 
			\mu^1_\rho(g,E).\]
		\end{enumerate}
	\end{enumerate}
\end{remark}

In Banach spaces (see, e.g., \cite{Gordon}) and in locally convex spaces (see 
\cite{PalugaPublishedDissertation}) the integral is unique. In a general semi-
normed space non-uniqueness may occur: if $F$ is a $\rho$-SKH primitive of $f$ 
and $H:[a,b]\to X$ is any nonzero, non-constant function with 
$\rho(\operatorname{im}(H))=0$, then $G(t)=F(t)+H(t)$ is also a primitive of 
$f$. Thus the integral is naturally interpreted as an equivalence class:
\[
\int_a^b f := \{\,F(b)-F(a): F \text{ is a } \rho\text{-SKH primitive of } f\,\}
= F(b)-F(a) + \ker\rho
\]
where $\ker\rho=\{x\in X:\rho(x)=0\}$.

One useful property of the $\rho$-variation for the \(X\)-valued function \(h\) 
is additivity over a finite non-overlapping cover of a closed interval.

\begin{theorem}
	\label{Additivityofvariationalmeasureoverintervals}
	Let $h:[a,b]\to X$ be a function. For any finite partition
	\[
	a=x_0 < x_1 < \cdots < x_n=b
	\]
	we have
	\[
	\mu^1_\rho(h,[a,b]) = \sum_{i=1}^n \mu^1_\rho(h,[x_{i-1},x_i]).
	\]
	Moreover, if $I_1,\dots,I_k$ are non-overlapping closed subintervals of 
	$[a,b]$, then
	\[
	\mu^1_\rho(h,[a,b]) \ge \sum_{j=1}^k \mu^1_\rho(h,I_j).
	\]
\end{theorem}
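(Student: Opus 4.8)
The plan is to split the claimed equality into two inequalities and to recognize that the hard content lies entirely in the second (superadditivity) assertion. Write $m(\cdot):=\mu_\rho(\Theta h,\cdot)$. By Remark~\ref{murho} the set function $m$ is an outer measure, so it is monotone and countably subadditive; in particular, from $[a,b]=\bigcup_{i=1}^n[x_{i-1},x_i]$ we obtain $m([a,b])\le\sum_{i=1}^n m([x_{i-1},x_i])$ with no bookkeeping needed at the shared endpoints. It therefore suffices to prove the reverse inequality, and more generally the superadditivity
\[
m([a,b])\ge\sum_{j=1}^k m(I_j)
\]
for non-overlapping closed subintervals $I_1,\dots,I_k\subseteq[a,b]$; specializing to $I_i=[x_{i-1},x_i]$ then upgrades the subadditivity above to the asserted equality. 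Throughout I use that, since $\rho\ge0$, the supremum defining $Var_\rho(\Theta h,E,\delta)$ is realized by the $\delta$-fine tagged partial partitions whose tags all lie in $E$.

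The core step is the two-piece estimate $m([p,q])\ge m([p,c])+m([c,q])$ for $p<c<q$, which I would obtain by bounding $Var_\rho(\Theta h,[p,q],\delta)$ from below for an arbitrary gauge $\delta$. Fix $\varepsilon>0$ and pass to the finer gauge $\delta'(x)=\min\{\delta(x),\tfrac12|x-c|\}$ for $x\ne c$ (with $\delta'(c)=\delta(c)$), so that a $\delta'$-fine interval tagged at $t<c$ is forced into $[p,c)$ and one tagged at $t>c$ into $(c,q]$. Since $m(E)=\inf_\gamma Var_\rho(\Theta h,E,\gamma)\le Var_\rho(\Theta h,E,\delta')$, we may pick $\delta'$-fine partial partitions $\pi_1,\pi_2$ with tags in $[p,c)$ and in $(c,q]$ whose $(\rho\circ\Theta h)$-sums exceed $m([p,c))-\varepsilon/2$ and $m((c,q])-\varepsilon/2$. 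By the choice of $\delta'$ the intervals of $\pi_1$ lie in $[p,c)$ and those of $\pi_2$ in $(c,q]$, so $\pi_1\cup\pi_2$ is a single $\delta$-fine partial partition and
\[
Var_\rho(\Theta h,[p,q],\delta)\ge\sum_{\pi_1\cup\pi_2}(\rho\circ\Theta h)\ge m([p,c))+m((c,q])-\varepsilon.
\]
As $\delta$ and $\varepsilon$ are arbitrary, taking the infimum over $\delta$ and letting $\varepsilon\to0$ gives $m([p,q])\ge m([p,c))+m((c,q])$.

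It remains to remove the half-open endpoints and to pass from two pieces to many. Each singleton is $m$-null: a $\delta$-fine partial partition contains at most two intervals tagged at a fixed point $x$, each of length below $2\delta(x)$, so $Var_\rho(\Theta h,\{x\},\delta)\le 4\,\delta(x)\,\rho(h(x))\to0$ as $\delta(x)\to0$, whence $m(\{x\})=0$. Subadditivity then gives $m([p,c))=m([p,c])$ and $m((c,q])=m([c,q])$, so the two-piece estimate reads $m([p,q])\ge m([p,c])+m([c,q])$. Iterating this splitting across $x_1,\dots,x_{n-1}$ proves $m([a,b])\ge\sum_{i=1}^n m([x_{i-1},x_i])$ and hence the first assertion. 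For the second, order the $I_j=[c_j,d_j]$ from left to right and adjoin $a,b$ to the endpoints $c_j,d_j$ to obtain a partition $a=y_0<\cdots<y_N=b$ in which each $I_j$ is one of the cells $[y_{i-1},y_i]$ (non-overlapping forbids any $y$ strictly inside an $I_j$); the first assertion gives $m([a,b])=\sum_{i=1}^N m([y_{i-1},y_i])$, and discarding the nonnegative terms coming from the gaps between consecutive $I_j$ leaves $m([a,b])\ge\sum_{j=1}^k m(I_j)$.

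I expect the one genuine difficulty to be the compatibility of the two near-optimal partial partitions in the second paragraph: chosen independently, an interval tagged just to the left of $c$ may straddle $c$ and overlap an interval from the right-hand family, so their union would fail to be a partial partition at all. The gauge-refinement $\delta'(x)=\min\{\delta(x),\tfrac12|x-c|\}$ is precisely what rules this out, and the only point to verify is that refining the gauge does not destroy the lower bounds $Var_\rho(\Theta h,E,\delta')\ge m(E)$, which is automatic because $m(E)$ is the infimum of $Var_\rho(\Theta h,E,\cdot)$ over \emph{all} gauges. The remaining ingredients---subadditivity from the outer-measure property, nullity of points, and the inductive passage to finitely many intervals---are routine, as is the trivial case in which some $m(I_j)=\infty$.
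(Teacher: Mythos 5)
Your proof is correct and follows essentially the same route as the paper's: both reduce everything to the two-piece superadditivity $\mu_\rho(\Theta h,[p,q]) \ge \mu_\rho(\Theta h,[p,c]) + \mu_\rho(\Theta h,[c,q])$, proved by choosing a gauge that forbids intervals tagged on one side of $c$ from crossing $c$, so that partial partitions coming from the two sides merge into a single fine partial partition. The only differences are bookkeeping: the paper fixes a near-optimal gauge for $[p,q]$ and neutralizes the tags at $p$, $c$, $q$ by making the gauge smaller than $\varepsilon$ there (yielding a $C\varepsilon$ error term), whereas you fix an arbitrary gauge, take near-optimal partial partitions on the two half-intervals, and dispose of the endpoints by observing that singletons are $\mu_\rho$-null---equivalent devices, though yours also spells out the outer-measure subadditivity and the final gap-filling step for the non-overlapping intervals, which the paper leaves implicit.
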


\begin{proof}
	For one direction, it suffices to show that for $a\le a_1<c<b_1\le b$,
	\begin{equation}
		\label{sup-additive}
		\mu^1_\rho(h,[a_1,b_1]) \ge \mu^1_\rho(h,[a_1,c]) + 
		\mu^1_\rho(h,[c,b_1]).
	\end{equation}
	The converse inequality is immediate from the monotonicity of \(\mu^1_\rho\).
	
	Fix $\varepsilon>0$. By the definition of $\mu$ there is a gauge 
	$\delta$ on $[a,b]$ such that every $\delta$-fine partial partition $D$ of 
	$[a,b]$ satisfies
	\[
	(D)\sum \rho h(t)(v-u) < \mu(\rho h,[a_1,b_1]) + \varepsilon.
	\]
	
	Moreover, we may assume that
	\[
	\begin{cases}
		(t-\delta(t),t+\delta(t)) \subset (a_1,c) & \text{for } t\in[a_1,c),\\[4pt]
		(t-\delta(t),t+\delta(t)) \subset (c,b_1) & \text{for } t\in(c,b_1],\\[4pt]
		\delta(a_1),\delta(c),\delta(b_1) < \varepsilon.
	\end{cases}
	\]
	Let $\delta$ be as above and let $\delta_1,\delta_2\le\delta$ be 
	arbitrary gauges.
	Choose any $\delta_1$--fine partial partition $D_1$ and any 
	$\delta_2$--fine partial
	partition $D_2$ of $[a,b]$.  Form $D_0$ by taking the union of all interval-points
	from $D_1\cup D_2$ except possibly those with tag equal to $a_1,c,b_1$; note that
	the interval-points in $D_1$ and $D_2$ are non-overlapping except possibly at 
	the points with tag $c$, hence $D_0$ is $\delta$--fine.  (Thus $D_0$ contains all 
	intervals of $D_1\cup D_2$ except the two that meet at $c$, which we keep separate.) 
	Hence
	\begin{align*}
		&(D_1)\sum \rho h(t)(v-u) + (D_2)\sum \rho h(t)(v-u)\\
		&\qquad\le (D_0)\sum \rho h(t)(v-u) + \rho(h(c))(v^1_c-u^1_c) + \rho(h(c))(v^2_c-u^2_c)\\
		&\qquad\quad + \rho(h(a_1))(v_{a_1}-u_{a_1}) + \rho(h(b_1))(v_{b_1}-u_{b_1})\\
		&\qquad< \mu^1_\rho(h,[a_1,b_1]) + \varepsilon + 2\rho(h(c))\varepsilon + \rho(h(a_1))\varepsilon + \rho(h(b_1))\varepsilon.
	\end{align*}
	Taking the supremum first over all $\delta_1$-fine $D_1$ (with $D_2$ 
	fixed) and then over all $\delta_2$-fine $D_2$, we obtain
	\[
	\mu^1_\rho(h,[a_1,c]) + \mu^1_\rho(h,[c,b_1])
	\le \mu^1_\rho(h,[a_1,b_1]) + C\varepsilon
	\]
	for some constant $C>0$ independent of $\varepsilon$. Letting 
	$\varepsilon\to 0$ yields \eqref{sup-additive}.
\end{proof}

We end this section by discussing the Kurzweil--Henstock integral of functions 
with values in $(X,\rho)$.

\begin{definition}
	A function $f:[a,b]\to X$ is \emph{$\rho$-Kurzweil--Henstock} 
	($\rho$-KH) integrable on $[a,b]$ if there exists a vector $z\in X$ such that 
	for every $\varepsilon>0$ there is a gauge $\delta$ on $[a,b]$ with the property
	that for every $\delta$-fine tagged partition $D$ of $[a,b]$ one has
	\[
	\rho \big((D)\sum f(t)(v-u) - z\big) < \varepsilon.
	\]
	The vector $z$ is then called an integral of $f$ over $[a,b]$.
\end{definition}

\begin{remark}
	The $\rho$-KH integral of $f$ is not necessarily unique: if $z$ is an 
	integral, then every element of $z+\ker\rho$ is also an integral, and the 
	difference of any two integrals lies in $\ker\rho$. Thus it is natural to define
	
	\[
	\int_a^b f := z + \ker\rho,
	\]
	the equivalence class of an integral $z$ of $f$.
	
	If $X$ is sequentially complete and $f$ is $\rho$-KH integrable on 
	$[a,b]$, then $f$ is $\rho$-KH integrable on every closed subinterval 
	$[u,v]\subseteq[a,b]$. Moreover the integral satisfies the usual additivity 
	property
	\begin{equation}
		\label{Additivityofintegral}
		\int_u^v f = \int_u^t f + \int_t^v f \qquad (a\le u < t < v \le 
		b).
	\end{equation}
\end{remark}

\begin{proposition}
	\label{IntegralEstimateforforHKIntegral}
	Let $f$ be $\rho$-KH integrable. Then
	\[
	\rho\!\left(\int_a^b f\right) \le \mu^1_\rho(f,[a,b]).
	\]
\end{proposition}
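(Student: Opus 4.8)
The plan is to turn the seminorm of the integral into a seminorm of a Riemann-type sum, apply the triangle inequality to pass to a sum of seminorms, and then trap that sum between the integrability estimate and the variation $Var_\rho$. First I would observe that the quantity $\rho\!\left(\int_a^b f\right)$ is well defined: the integral is the coset $z+\ker\rho$, and since $|\rho(z_1)-\rho(z_2)|\le\rho(z_1-z_2)=0$ whenever $z_1-z_2\in\ker\rho$, the value $\rho(z)$ is independent of the chosen representative. If $\mu_\rho(\Theta f,[a,b])=\infty$ the inequality holds trivially, so I assume it is finite.

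Next, fix $\varepsilon>0$ and a representative $z\in\int_a^b f$. Using that $\mu_\rho(\Theta f,[a,b])$ is an \emph{infimum} over gauges, I would choose a gauge $\delta_1$ with $Var_\rho(\Theta f,[a,b],\delta_1)<\mu_\rho(\Theta f,[a,b])+\varepsilon$. Independently, by $\rho$-KH integrability I choose a gauge $\delta_2$ so that every $\delta_2$-fine tagged partition $D$ of $[a,b]$ satisfies $\rho\!\left((D)\sum f(t)(v-u)-z\right)<\varepsilon$. Setting $\delta=\min(\delta_1,\delta_2)$, Cousin's lemma produces a $\delta$-fine tagged partition $D$ of $[a,b]$, and this $D$ is simultaneously $\delta_1$-fine and $\delta_2$-fine, so it serves both estimates at once. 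For this $D$, subadditivity and positive homogeneity of $\rho$ (recall $v-u>0$) give
\[\rho(z)\le\rho\Big(z-(D)\sum f(t)(v-u)\Big)+\rho\Big((D)\sum f(t)(v-u)\Big)<\varepsilon+(D)\sum\rho(f(t))(v-u).\]
Since $D$ is $\delta_1$-fine, the final sum equals $(D)\sum(\rho\circ\Theta f)([u,v],t)$ and is therefore bounded by $Var_\rho(\Theta f,[a,b],\delta_1)<\mu_\rho(\Theta f,[a,b])+\varepsilon$, whence $\rho(z)<\mu_\rho(\Theta f,[a,b])+2\varepsilon$; letting $\varepsilon\to0$ finishes the argument.

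The only genuinely substantive inequality here is the seminorm triangle inequality $\rho\!\left(\sum\cdot\right)\le\sum\rho(\cdot)$; everything else is bookkeeping with gauges. Accordingly, I expect the main point requiring care to be the quantifier order: the integrability condition supplies a gauge making the sum close to $z$, while the variation must be approximated \emph{from above} by choosing a near-optimal gauge for the infimum defining $\mu_\rho$. Reconciling these into one usable partition is exactly what taking the pointwise minimum $\delta=\min(\delta_1,\delta_2)$ and invoking the existence of $\delta$-fine partitions accomplishes.
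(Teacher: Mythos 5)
Your proof is correct and takes essentially the same route as the paper: both arguments bound $\rho(z)$ by the triangle inequality using a single tagged partition that is simultaneously fine for the integrability gauge and for a near-optimal gauge in the infimum defining $\mu_\rho(\Theta f,[a,b])$, yielding $\rho(z) < \mu_\rho(\Theta f,[a,b]) + 2\varepsilon$ and then letting $\varepsilon \to 0$. Your explicit handling of the gauge merging via $\delta = \min(\delta_1,\delta_2)$ and Cousin's lemma, and your remark that $\rho$ is constant on the coset $z + \ker\rho$, simply make precise details the paper leaves implicit.
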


\begin{proof}
	Let $\varepsilon>0$. By integrability there exists a gauge $\delta$ and 
	a $\rho$-KH integral $z$ of $f$ such that for every $\delta$-fine tagged 
	partition $D$ one has
	\[
	\rho\!\big((D)\sum f(t)(v-u) - z\big) < \varepsilon.
	\]
	Choosing the same (or a sufficiently small) gauge $\delta$ we also have
	\[
	(D)\sum \rho f(t)(v-u) < \mu^1_\rho(f,[a,b]) + \varepsilon
	\]
	for every such $D$. Consequently
	\[
	\begin{aligned}
		\rho(z)
		&\le \rho\!\big((D)\sum f(t)(v-u)\big) + \varepsilon \\
		&\le \big((D)\sum \rho f(t)(v-u)\big) + \varepsilon \\
		&< \mu^1_\rho(f,[a,b]) + 2\varepsilon.
	\end{aligned}
	\]
	Letting $\varepsilon\to 0$ yields the desired estimate.
\end{proof}

\section{Functions with upper integrable seminorm}

For real-valued functions on a compact interval, every Lebesgue integrable
function is Kurzweil--Henstock (KH) integrable, and the two integrals coincide
\cite{YeeVyborny,Henstock}. Moreover, a measurable function is Lebesgue
integrable if and only if both the function and its absolute value are KH 
integrable.
This equivalence motivates definitions of absolute KH-integrability for 
functions
with values in a semi-normed space.

Inspired by that connection, in this section we develop the concept of absolute 
SKH-integrability for functions taking values in a fixed semi-normed space 
\((X,\rho)\). We begin with the notion used in \cite{MazaCanoy3}.

\begin{definition}
	A function \(f:[a,b]\to X\) is said to be \emph{absolutely 
		\(\rho\)-Kurzweil--Henstock (KH) integrable} if \(f\) is \(\rho\)-SKH integrable
	and the real-valued function \(\rho(f)\) is Kurzweil--Henstock integrable.
\end{definition}

For an absolutely KH-integrable \(f\) the integral of \(\rho(f)\) satisfies
\begin{equation}
	\label{pAbsoluteIntegral}
	\int_a^b \rho(f) \;=\; \mu^1_\rho(f,[a,b])
	\;=\; \inf_{\text{gauge }\delta}\sup\Big\{(D)\sum (\rho f)(t) (v-u): 
	D\text{ is }\delta\text{-fine}\Big\}.
\end{equation}
Indeed, fix \(\varepsilon>0\). There exists a gauge \(\delta\) on \([a,b]\) such
that for every \(\delta\)-fine partition \(D\) one has
\[
|(D)\sum (\rho f)(t) (v-u) - \int_a^b \rho(f)| < \varepsilon,
\]
hence \((D)\sum (\rho f)(t) (v-u) \le \int_a^b \rho(f) + \varepsilon\), which 
gives \(\mu^1_\rho(f,[a,b]) \le \int_a^b \rho(f) + \varepsilon\); letting 
\(\varepsilon\to 0\) yields one inequality, and the opposite inequality is 
proved similarly.

The \(\rho\)-SKH primitives of absolutely integrable functions satisfy natural 
absolute-continuity and bounded-variation properties. The next estimate is the 
analogue of Proposition~\ref{IntegralEstimateforforHKIntegral}.

\begin{proposition}[\cite{MazaCanoy3}]
	\label{IntegralEstimateforp=1}
	Let \(f\) be \(L_\rho^1\)-integrable with \(L_\rho^1\)-primitive \(F\). 
	Then
	\[
	\rho\!\left(\int_a^b f\right) \le \int_a^b \rho(f).
	\]
\end{proposition}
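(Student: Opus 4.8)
The plan is to give a direct argument assembled from the two ingredients already in hand: the defining estimate for the $\rho$-SKH primitive $F$ and the real-valued KH integrability of $\rho(f)$, glued together by the triangle inequality applied to a telescoping sum. First I would check that the left-hand side is well-posed. Since any two $\rho$-SKH primitives of $f$ differ by a function whose image lies in $\ker\rho$, the quantity $\rho(F(b)-F(a))$ does not depend on the chosen primitive, so $\rho\left(\int_a^b f\right)$ means precisely $\rho(F(b)-F(a))$, and it suffices to bound this number.

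Next, fix $\varepsilon>0$. Because $f$ is $L_\rho^1$ integrable it is in particular $\rho$-SKH integrable with primitive $F$, so there is a gauge $\delta_1$ with
\[
(D)\sum \rho\left(F(v)-F(u)-f(t)(v-u)\right) < \varepsilon
\]
for every $\delta_1$-fine tagged partition $D$ of $[a,b]$; and since $\rho(f)$ is KH integrable in the real sense there is a gauge $\delta_2$ with
\[
\left|(D)\sum \rho(f(t))(v-u) - \int_a^b\rho(f)\right| < \varepsilon
\]
for every $\delta_2$-fine $D$. Taking $\delta=\min\{\delta_1,\delta_2\}$ and any $\delta$-fine tagged partition $D$ of $[a,b]$ makes both estimates hold at once.

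The core computation is then the telescoping identity $F(b)-F(a)=(D)\sum\left(F(v)-F(u)\right)$. Applying $\rho$, the triangle inequality, and inserting the term $f(t)(v-u)$ inside each summand yields
\begin{align*}
\rho\left(F(b)-F(a)\right)
&\leq (D)\sum \rho\left(F(v)-F(u)-f(t)(v-u)\right) + (D)\sum \rho(f(t))(v-u)\\
&< \int_a^b \rho(f) + 2\varepsilon.
\end{align*}
Since the left-hand side is independent of $\varepsilon$, letting $\varepsilon\to 0$ gives the claim.

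I do not expect a genuine obstacle here; the only points requiring care are bookkeeping. One must coordinate the two gauges, which is handled by intersecting them, and one must apply the telescoping identity for the primitive increments \emph{before} the triangle inequality rather than after, since reversing the order would fail to control $\rho(F(b)-F(a))$. The same conclusion also follows structurally: the defining estimate for $F$ shows, again by the triangle inequality on the defect sum, that $f$ is $\rho$-KH integrable with integral $F(b)-F(a)$, so Proposition \ref{IntegralEstimateforforHKIntegral} gives $\rho\left(\int_a^b f\right)\leq \mu_\rho(\Theta f,[a,b])$, and the identity \eqref{pAbsoluteIntegral} rewrites the right-hand side as $\int_a^b\rho(f)$.
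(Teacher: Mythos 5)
Your proof is correct. For context: the paper itself states Proposition \ref{IntegralEstimateforp=1} without proof, citing \cite{MazaCanoy3}, so there is no in-paper argument to compare against; but your direct argument---telescoping $F(b)-F(a)$ over a partition fine for both gauges, splitting each increment by the triangle inequality, and letting $\varepsilon\to 0$---is the natural one, and it runs exactly parallel to the paper's own proof of the companion estimate, Proposition \ref{IntegralEstimateforforHKIntegral}. Your closing ``structural'' route is also sound and is arguably the cleanest within the paper's framework: the defect estimate for $F$ shows that $f$ is $\rho$-KH integrable with $F(b)-F(a)$ as an integral, Proposition \ref{IntegralEstimateforforHKIntegral} then bounds $\rho(F(b)-F(a))$ by $\mu_\rho(\Theta f,[a,b])$, and the identity \eqref{pAbsoluteIntegral} identifies that variation with $\int_a^b\rho(f)$. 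One small inaccuracy in your well-posedness paragraph: two $\rho$-SKH primitives of $f$ need \emph{not} differ by a function whose image lies in $\ker\rho$---adding any constant $c$, even with $\rho(c)\neq 0$, to a primitive yields another primitive, since only increments enter the definition. What is true is that the \emph{increments} of the difference lie in $\ker\rho$: if $F$ and $G$ are both primitives, then $\mu_\rho\bigl(\Delta(F-G),[a,b]\bigr)=0$, and telescoping over a fine partition of $[u,v]$ forces $\rho\bigl((F-G)(v)-(F-G)(u)\bigr)=0$. This is exactly what makes $\rho(F(b)-F(a))$ independent of the chosen primitive, and it is all your argument needs, so the slip does not affect the validity of the proof.
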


We now generalize the notion of absolute integrability.

\begin{definition}
	Let \(1 \le p < \infty\). A function \(f:[a,b]\to X\) is said to be 
	\(L_\rho^p\)-integrable if \(f\) is \(\rho\)-SKH integrable and the real-valued 
	function \(\rho(f)^p\) is Kurzweil--Henstock integrable. Denote by 
	\(L_\rho^p([a,b];X)\) the space of such \(X\)-valued functions on \([a,b]\).
	
	When \(X\) is locally convex, \(f:A\to X\) is called \(L^p\)-integrable 
	if \(f\) is \(L^p_\rho\)-integrable for every continuous semi-norm \(\rho\); the
	corresponding space is denoted \(L^p([a,b];X)\).
\end{definition}

From Remark \ref{murho}, we observe that omitting the integrability condition on
the function results to a larger space of functions.

\begin{definition}
	Let \(1 \le p < \infty\) and \(E \subseteq [a,b]\). For a real-valued 
	function \(h\) on \([a,b]\), let
	\[\operatorname{Var}^p(h,E,\delta) = \sup\Big\{(D)\sum h^p(t) (v-u): 
	D\text{ is }\delta\text{-fine}\Big\}
	\]
	and
	\[
	\mu^p_\rho(f,E) := \inf_{\text{gauge }\delta} \operatorname{Var}^p(\rho 
	f,E,\delta).
	\]
	
	A function \(f:[a,b]\to X\) is \(\mathcal{U}^p_\rho\)-integrable on 
	\([a,b]\) if
	\[
	\mu^p_\rho(f,[a,b]) < \infty.
	\]
	For a set \(A\subseteq[a,b]\) we say \(f\) is 
	\(\mathcal{U}^p_\rho\)-integrable on \(A\) if \(f1_A\) is 
	\(\mathcal{U}^p_\rho\)-integrable on \([a,b]\).
	
	Define
	\[
	\mathcal{U}^p_\rho(A;X)=\{\,f: \mu^p_\rho(f,A)<\infty\,\} \quad 
	\text{and}
	\quad
	\|f\|_{\mathcal{U}^p_\rho(A;X)} := \big(\mu^p_\rho(f,A)\big)^{1/p}.
	\]
	If \(X\) is locally convex, write \(\mathcal{U}^p(A;X)\) for the class 
	of functions that are \(\mathcal{U}^p_\rho\)-integrable for every continuous 
	semi-norm \(\rho\).
\end{definition}

\begin{remark}
	\ 
	\begin{enumerate}
		\item The notation \(\mathcal{U}^p_\rho\) is motivated by 
		``upper integrability'' in \cite{TulceaTulcea} and the viewpoint in 
		\cite{Thomson1}.
		\item One easily checks that for a semi-normed space $(X,\rho)$ 
		one has
		\[
		L^p_\rho(A;X)=\mathcal U^p_\rho(A;X)\cap SKH_\rho(A),
		\]
		and when $X$ is locally convex (with continuous semi-norms $\rho$), 
		the analogous equality holds for $L^p(A;X)$ and $\mathcal U^p(A;X)$ (i.e. 
		the equality holds for each $\rho$ and therefore in the locally convex topology).
	\end{enumerate}
\end{remark}

Many classical properties of Lebesgue integrable functions have analogues in 
this setting.

\begin{lemma}\label{lem:Holder-variational}
	Let \(1<p<\infty\) and let \(p^{*}\) be the conjugate exponent, 
	\(1/p+1/p^{*}=1\).
	Assume \(f\in \mathcal{U}^{p}_{\rho}(A;X)\) with 
	\(\|f\|_{\mathcal{U}^{p}_{\rho}(A;X)}\neq 0\). Define
	\[
	f^{*}(x):=\|f\|_{\mathcal{U}^{p}_{\rho}(A;X)}^{\,1-p}\,(\rho(f(x)))^{p-1},
	\qquad x\in A.
	\]
	Then \(f^{*}\in \mathcal{U}^{p^{*}}(A;\mathbb{R})\) with 
	\(\|f^{*}\|_{\mathcal{U}^{p^{*}}(A;\mathbb{R})}=1\), and
	\[
	\|f\|_{\mathcal{U}^{p}_{\rho}(A;X)} = \mu^1_\rho\big(f^{*}f,A\big).
	\]
	Moreover, if \(g\in \mathcal{U}^{p^{*}}(A;\mathbb{R})\) then \(g f \in 
	L\mathcal{U}^{1}_{\rho}(A;X)\) and
	\begin{equation}
		\label{eq:Holder-variational}
		\mu^1_\rho\big(gf,A\big) \le 
		\|g\|_{\mathcal{U}^{p^{*}}(A;\mathbb{R})}\,\|f\|_{\mathcal{U}^{p}_{\rho}(A;X)}.
	\end{equation}
\end{lemma}

\begin{proof}
	Since \(\|f\|_{\mathcal{U}^{p}_{\rho}}>0\), then pointwise
	\[
	\big(f^{*}(x)\big)^{p^{*}}
	= \|f\|_{\mathcal{U}^{p}_{\rho}}^{-p}\,(\rho(f(x)))^{p},
	\]
	since \((p-1)p^{*}=p\) and \(p^{*}(1-p)=-p\). By positive homogeneity of
	\(\mu(\cdot,A)\),
	\[
	\mu\big((f^{*})^{p^{*}},A\big)
	= \|f\|_{\mathcal{U}^{p}_{\rho}}^{-p}\,\mu\big(\rho(f)^{p},A\big)
	= 1,
	\]
	so \(f^{*}\in \mathcal{U}^{p^{*}}(A;\mathbb{R})\) and 
	\(\|f^{*}\|_{\mathcal{U}^{p^{*}}}=1\).
	
	Moreover,
	\[
	\rho\big(f^{*}(x)f(x)\big)
	= f^{*}(x)\,\rho(f(x))
	= \|f\|_{\mathcal{U}^{p}_{\rho}}^{\,1-p}\,\rho(f(x))^{p},
	\]
	whence, by homogeneity,
	\[
	\mu^1_\rho\big(f^{*}f,A\big)
	= \|f\|_{\mathcal{U}^{p}_{\rho}}^{\,1-p}\,\mu\big(\rho(f)^{p},A\big)
	= \|f\|_{\mathcal{U}^{p}_{\rho}}.
	\]
	
	For the second part, let \(p^{*}\) be the conjugate exponent 
	(\(1/p+1/p^{*}=1\)) and assume \(\|f\|_{\mathcal{U}^p_{\rho}(A;X)} = 1\) and 
	\(\|g\|_{\mathcal{U}^{p^{*}}(A;\mathbb{R})} = 1\). By Young's inequality, for 
	each \(x\in A\) we have
	\[
	\rho(f(x))\,|g(x)|
	\le \frac{1}{p}\,\rho(f(x))^{p} + \frac{1}{p^{*}}\,|g(x)|^{p^{*}}.
	\]
	Multiply both sides by the length of a subinterval \((v-u)\), sum over 
	the tagged intervals of a \(\delta\)-fine partition \(D\), take the supremum over 
	\(\delta\)-fine partitions and then the infimum over gauges to obtain
	\[
	\mu^1_\rho\big(gf,A\big)
	\le \frac{1}{p}\,\mu^p_\rho\big(f,A\big) + 
	\frac{1}{p^{*}}\,\mu^{p^*}_{|\cdot|} (g,A).
	\]
	Observe that
	\[
	\mu^p_\rho\big(f,A\big)=\|f\|^p_{\mathcal{U}^p_{\rho}(A;X)} = 1,
	\qquad
	\mu^{p^*}_{|\cdot|}\big(g,A\big)=\|g\|^{p^*}_{\mathcal{U}^{p^{*}}(A;\mathbb{R})}=1.
	\]
	Hence the right-hand side equals \(1/p+1/p^{*}=1\), so
	\[
	\mu^1_\rho\big(gf,A\big) \le 1.
	\]
	In general, if \(\|f\|_{\mathcal{U}^p_{\rho}(A;X)}\neq 0\) and 
	\(\|g\|_{\mathcal{U}^{p^{*}}(A;\mathbb{R})}\neq 0\) then the functions below 
	have corresponding semi-norms equal to \(1\):
	\[
	\|f\|_{\mathcal{U}^p_{\rho}(A;X)}^{-1} f,\qquad 
	\|g\|_{\mathcal{U}^{p^{*}}(A;\mathbb{R})}^{-1} g,
	\]
	It follows from the homogeneity of \(\mu\) that
	\begin{align*}
		\mu^1_\rho\big(gf,A\big)
		&= 
		\|g\|_{\mathcal{U}^{p^{*}}(A;\mathbb{R})}\,\|f\|_{\mathcal{U}^p_{\rho}(A;X)} 
		\mu^1_\rho \big(\|g\|^{-1}_{\mathcal{U}^{p^{*}}(A;\mathbb{R})}g 
		\|f\|^{-1}_{\mathcal{U}^p_{\rho}(A;X)}f,A\big)\\
		&\le 
		\|g\|_{\mathcal{U}^{p^{*}}(A;\mathbb{R})}\,\|f\|_{\mathcal{U}^p_{\rho}(A;X)}
	\end{align*}
	which is exactly \eqref{eq:Holder-variational}.
\end{proof}

\begin{lemma}
	\label{lem:Up-seminorm-and-embedding}
	Let \(A\subseteq[a,b]\) and \(1\le p<\infty\). The space 
	\(\mathcal{U}^{p}_{\rho}(A;X)\) is a seminormed space with seminorm
	\[
	\|f\|_{\mathcal{U}^{p}_{\rho}(A;X)} := \mu\big(\rho(f)^{p},A\big)^{1/p}.
	\]
	Moreover, if \(1\le p<q\) then \(\mathcal{U}^{q}_{\rho}(A;X)\subset 
	\mathcal{U}^{p}_{\rho}(A;X)\) and
	\begin{equation}\label{eq:Up-embedding}
		\|f\|_{\mathcal{U}^{p}_{\rho}(A;X)} \le 
		m(A)^{\,\frac{1}{p}-\frac{1}{q}} \,\|f\|_{\mathcal{U}^{q}_{\rho}(A;X)},
	\end{equation}
	where \(m(A)\) denotes the Lebesgue measure of \(A\).
\end{lemma}

\begin{proof}
	Consider \(f,g \in \mathcal{U}^p_{\rho}(A;X)\) and let \(h = (f+g)^* \in 
	\mathcal{U}^{p^*}(A;\mathbb{R})\). Then
	\[\|(f+g)^*\|_{\mathcal{U}^q_{\rho}(A;X)} = 1.\]
	By Lemma \ref{lem:Holder-variational},
	\begin{align*}
		\|f+g\|_{\mathcal{U}^p_{\rho}(A;X)} &= \mu(h(f+g),A)\\
		&\le \|h\|_{\mathcal{U}^{p^*}(A;\mathbb{R})} 
		\|f\|_{\mathcal{U}^p_{\rho}(A;X)} +\|h\|_{\mathcal{U}^{p^*}(A;\mathbb{R})} 
		\|g\|_{\mathcal{U}^p_{\rho}(A;X)}\\
		&= \|f\|_{\mathcal{U}^p_{\rho}(A;X)} + 
		\|g\|_{\mathcal{U}^p_{\rho}(A;X)}.
	\end{align*}
	For the second part, let \(f \in \mathcal{U}_\rho^q(A;X)\). Since 
	\(\rho(f)^q = (\rho(f)^p)^{q/p}\), it follows that \(\rho(f)^p \in 
	\mathcal{U}^{\frac{q}{p}}(A;\mathbb{R})\). The inequality 
	\eqref{eq:Holder-variational} of Lemma \ref{lem:Holder-variational} for \(X = 
	\mathbb{R}\) with \(\rho = | \cdot|\) is given by
	\begin{equation}
		\label{eq:RealHolder-variational}
		\mu^1_{| \cdot |}\big(gf,A\big) \le \|g\|_{\mathcal{U}^{p^{*}}(A
			;\mathbb{R})}\,\|f\|_{\mathcal{U}^{p}_{\rho}(A;\mathbb{R})}.
	\end{equation}
	
	Applying \eqref{eq:RealHolder-variational} to the functions 
	\(\rho(f)^p\) and \(1_A\) with exponent \(\frac{q}{p}\),
	\begin{equation}
		\label{eq:RealHolder-variationalApplied1}
		\mu^1_{|\cdot|}(\rho (f)^p,A) \leq 
		\|1_A\|_{\mathcal{U}^{(\frac{q}{p})^*}(A;\mathbb{R})} 
		\|\rho(f)^p\|_{\mathcal{U}^{\frac{q}{p}}(A;\mathbb{R})}
	\end{equation}
	Now, from the following equations
	\[\mu^1_{|\cdot|}(\rho (f)^p,A) = \inf_{\text{gauge }\delta} 
	\operatorname{Var}^p(|\rho (f)^p|,A,\delta) = \mu^p_\rho(f,A)\]
	and
	\[
	\begin{gathered}                
		\|\rho(f)^p\|^{\frac{q}{p}}_{\mathcal{U}^{\frac{q}{p}}(A;\mathbb{R})} = 
		\mu^{q/p}_{|\cdot|}(\rho (f)^p,A) = \inf_{\text{gauge }\delta} 
		\operatorname{Var}^{q/p}(|\rho (f)^p|,A,\delta)\\
		=  \inf_{\text{gauge }\delta} \operatorname{Var}^q(\rho 
		(f),A,\delta) = \mu^q_\rho(f,A) = \|f\|^q_{\mathcal{U}^q_\rho(A;X)},
	\end{gathered}
	\]
	inequality \eqref{eq:RealHolder-variationalApplied1} turns into
	\[\mu^p_\rho(f,A) \leq 
	\|1_A\|_{\mathcal{U}^{(\frac{q}{p})^*}(A;\mathbb{R})}  
	\|f\|^p_{\mathcal{U}^q_\rho(A;X)}.\]
	Hence,
	\[\|f\|_{\mathcal{U}_\rho^p(A;X)} \leq 
	m(A)^{\frac{1}{p}-\frac{1}{q}} \|f\|_{\mathcal{U}_\rho^q(A;X)}.\]
\end{proof}

\begin{theorem}
	\label{measureofascendingsets}
	Let \(f : [a,b] \rightarrow X\) and let \(\{A_n\}\) be an ascending 
	sequence of subsets of \([a,b]\); set \(A=\bigcup_{n} A_n\). Then
	\begin{equation}
		\label{MeasureofAscendingSets}
		\mu^p_\rho(f,A) = \lim_{n \rightarrow \infty} \mu^p_\rho(f,A_n).
	\end{equation}
	In particular, if \(\{\mu^p_\rho(f,A_n)\}\) is bounded above then \(f 
	\in \mathcal{U}^p_{\rho}(A;X)\).
\end{theorem}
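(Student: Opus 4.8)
The stated equality splits into two inequalities. The direction $\mu_\rho(\Theta^p(f),A)\ge \lim_{n}\mu_\rho(\Theta^p(f),A_n)$ is immediate: since each $A_n\subseteq A$ and $\mu_\rho(\Theta^p(f),\cdot)$ is an outer measure by Remark \ref{murho}(1), monotonicity yields $\mu_\rho(\Theta^p(f),A_n)\le \mu_\rho(\Theta^p(f),A)$ for every $n$, and as $\{A_n\}$ ascends the sequence $\{\mu_\rho(\Theta^p(f),A_n)\}$ is nondecreasing, so its limit is at most $\mu_\rho(\Theta^p(f),A)$. Hence the whole content is the reverse inequality, namely continuity from below of the outer measure $\mu:=\mu_\rho(\Theta^p(f),\cdot)$ along $\{A_n\}$. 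The plan is first to discard the trivial case and assume $L:=\lim_n\mu(A_n)<\infty$.

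For the reverse inequality the plan is to use the measure-theoretic structure of $\mu$ rather than to splice gauges by hand. Fixing $\varepsilon>0$, the definition of $\mu$ as an infimum over gauges lets me choose, for each $n$, a gauge $\delta_n$ with $Var_\rho(\Theta^p(f),A_n,\delta_n)<\mu(A_n)+\varepsilon 2^{-n}$. The tempting direct route---define one gauge on $A$ by $\delta(t)=\delta_{n(t)}(t)$, where $n(t)$ is the first index with $t\in A_n$, then bound an arbitrary $\delta$-fine partition by grouping its tags according to $n(t)$---does not close: each group is controlled only by the full variation $Var_\rho(\cdot,A_{n(t)},\delta_{n(t)})\approx L$, so summing over the finitely many but unboundedly many groups produces a multiple of $L$ rather than $L$. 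This blow-up is the crux, and it reflects the genuine fact that continuity from below can fail for a bare outer measure and must draw on regularity.

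I would therefore route the argument through measurable hulls. Invoking that $\mu$ is not merely an outer measure but a Borel-regular one---a property of the variational construction that I would extract from \cite{MazaCanoy3} or, failing that, establish by a Vitali covering argument---I produce for each $n$ a $\mu$-measurable hull $\widehat{A_n}\supseteq A_n$ with $\mu(\widehat{A_n})=\mu(A_n)$. Setting $C_n=\bigcap_{m\ge n}\widehat{A_m}$ gives an ascending sequence of $\mu$-measurable sets with $A_n\subseteq C_n\subseteq\widehat{A_n}$, so monotonicity forces $\mu(C_n)=\mu(A_n)$. Because the $C_n$ are Carath\'eodory measurable, continuity from below holds for them, and with $C=\bigcup_n C_n\supseteq A$ I conclude $\mu(A)\le\mu(C)=\lim_n\mu(C_n)=\lim_n\mu(A_n)$, which is exactly the reverse inequality.

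The step I expect to be hardest is the construction of the measurable hull of equal variational measure, equivalently the Borel-regularity of $\mu$: the obstacle is that $\delta_n$ governs only partitions whose tags already sit in $A_n$, so enlarging $A_n$ to a Borel superset while keeping its variation near $\mu(A_n)$ cannot be done by a plain estimate and needs a covering argument. Once regularity is secured the rest is routine, and the ``in particular'' clause follows at once: if $\{\mu_\rho(\Theta^p(f),A_n)\}$ is bounded above then $L<\infty$, and the equality gives $\mu_\rho(\Theta^p(f),A)=L<\infty$, that is, $f\in\mathcal{U}^p_\rho(A;X)$.
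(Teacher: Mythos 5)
Your reduction is internally consistent---monotonicity gives the easy inequality, and hulls plus countable additivity on the Carath\'eodory \(\sigma\)-algebra would give the hard one---and your diagnosis of why naive gauge-splicing blows up by a factor of \(N\) is exactly right. But the ingredient on which your whole argument rests, Borel regularity of \(\mu:=\mu_\rho(\Theta^p(f),\cdot)\), is not merely the hardest step: it is \emph{false} in general, because \(\rho\circ f\) need not be Lebesgue measurable. Take \(X=\mathbb{R}\), \(\rho=|\cdot|\), \(p=1\), let \(E\subseteq[0,1]\) be a Bernstein set, so that \(m^*(E)=m^*([0,1]\setminus E)=1\), and put \(f=1_E+M\,1_{[0,1]\setminus E}\) with \(M>1\). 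By Remark \ref{murho} (items (2)--(4)), \(\mu(E)=m^*(E)=1\) while \(\mu(S)=M\,m^*(S)\) for every \(S\subseteq[0,1]\setminus E\). Suppose \(\widehat{E}\supseteq E\) were \(\mu\)-measurable and set \(\beta=m^*([0,1]\setminus\widehat{E})\). Testing Carath\'eodory measurability with \(T=E\cup([0,1]\setminus\widehat{E})\) gives \(\mu(T)=\mu(E)+\mu([0,1]\setminus\widehat{E})=1+M\beta\). On the other hand, enclose \([0,1]\setminus\widehat{E}\) in an open set \(G\) with \(m(G)\le\beta+\epsilon\) and use a gauge forcing intervals tagged in \([0,1]\setminus\widehat{E}\) to lie in \(G\): in any \(\delta\)-fine partial partition the intervals tagged in \([0,1]\setminus\widehat{E}\) have total length \(\alpha\le m(G)\) and contribute at most \(M\alpha\), while the intervals tagged in \(E\) contribute at most \(1-\alpha\) because the intervals are non-overlapping; hence \(\mu(T)\le 1+(M-1)\beta\). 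Comparing the two expressions forces \(\beta=0\), and then \(\mu(\widehat{E})\ge M\,m^*(([0,1]\setminus E)\cap\widehat{E})\ge M(m^*([0,1]\setminus E)-\beta)=M>1=\mu(E)\). So \(E\) admits no \(\mu\)-measurable hull of equal (or even nearby) measure, and your plan cannot be carried out: already for the trivial ascending sequence \(A_n=E\) the hull you require does not exist. Neither Remark \ref{murho} nor \cite{MazaCanoy3} can supply this regularity, for the good reason that it fails.

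For comparison, the paper's proof uses no hulls at all: it disjointifies, setting \(E_1=A_1\) and \(E_n=A_n\setminus A_{n-1}\), chooses for each \(n\) a gauge \(\delta_n\) nearly realizing the variational measure on the \(n\)-th piece with error \(\varepsilon/2^n\), splices these into one gauge \(\delta\) (well defined precisely because the pieces are disjoint), and then uses the fact that a \(\delta\)-fine partial partition has only finitely many tags---so all its tags in \(A\) lie in a single \(A_N\)---to bound every \(\delta\)-fine sum by \(\sum_{j\le N}\mu_\rho(\Theta^p(f),E_j)+\varepsilon\), and finally by \(\mu_\rho(\Theta^p(f),A_N)+\varepsilon\). (Be aware that this final consolidation amounts to superadditivity of \(\mu_\rho(\Theta^p(f),\cdot)\) over disjoint, possibly non-measurable increments, a claim of exactly the same delicate nature as the regularity you assumed; so your instinct about where the true difficulty lies is accurate, and the gauge route also needs more justification than a one-line appeal.) If you want to salvage your own strategy, the correct move is to replace measurable hulls of \emph{sets}, which may not exist, by measurable envelopes of \emph{functions}, which always do: each \(u_n=\rho(f)^p 1_{A_n}\) has a Lebesgue measurable \(\phi_n\ge u_n\) whose integral equals the upper Lebesgue integral of \(u_n\); then \(\psi_n=\inf_{m\ge n}\phi_m\) is measurable, satisfies \(\psi_n\ge u_n\), increases in \(n\), and the classical monotone convergence theorem applies. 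This completes the proof once one shows that \(\mu_\rho(\Theta^p(f),S)\) equals the upper Lebesgue integral of \(\rho(f)^p1_S\): the inequality \(\le\) follows from Henstock's lemma applied to an integrable majorant, and the inequality \(\ge\) needs the layered Vitali covering argument you allude to---carried out at the level of functions, where it succeeds, rather than at the level of sets, where the counterexample above shows it cannot.
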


\begin{proof}
	Fix \(\varepsilon > 0\).  
	Define the mutually disjoint sequence of sets
	\[
	E_1 = A_1,\quad 
	E_2 = A_2 \setminus A_1,\quad 
	E_3 = A_3 \setminus (A_2 \cup A_1),\ \ldots
	\]
	so that for each \(N \geq 1\), we have
	\[
	A_N = \bigcup_{j=1}^N E_j.
	\]
	
	For each \(n \geq 1\), choose a gauge \(\delta_n\) on \([a,b]\) such 
	that
	\[
	Var^p (\rho f,E_n,\delta_n) \;\le\; \mu^p_\rho (f,E_n) + 
	\frac{\varepsilon}{2^n}.
	\]
	Define a gauge \(\delta\) on \([a,b]\) by setting \(\delta|_{E_n} = 
	\delta_n\) for each \(n\).  
	
	Let \(D = \{([u_i,v_i],t_i)\}_{i=1}^m\) be a \(\delta\)-fine partial 
	partition of \([a,b]\).  
	Since the tags of \(D\) that lie in \(A\) must all lie in some finite 
	stage \(A_N\), we have
	\[
	1_A(t_i) = \sum_{j=1}^N 1_{E_j}(t_i).
	\]
	
	Therefore,
	\[
	\begin{aligned}
		\sum_{i=1}^m &\rho(f(t_i))^p 1_A(t_i)(v_i-u_i)\\
		&= \sum_{j=1}^N \sum_{i=1}^m \rho(f(t_i))^p 1_{E_j}(t_i)(v_i-u_i) 
		&& \text{(since \(A_N = \bigcup_{j=1}^N E_j\))} \\[6pt]
		&\leq \sum_{j=1}^N Var^p(1_{E_j}\cdot \rho f,[u_i,v_i],\delta) 
		&& \text{(definition of variation as a supremum)} \\[6pt]
		&\leq \sum_{j=1}^N \Big( \mu^p_\rho(1_{E_j}\cdot f,[u_i,v_i]) + 
		\tfrac{\varepsilon}{2^j} \Big) 
		&& \text{(by the choice of \(\delta_j\))} \\[6pt]
		&\leq \sum_{j=1}^N \Big( \mu^p_\rho(1_{A_N}\cdot f,[u_i,v_i]) + 
		\tfrac{\varepsilon}{2^j} \Big) 
		&& (\text{monotonicity of } \mu^p_\rho)\\[6pt]
		&= \mu^p_\rho(f, A_N) + \sum_{j=1}^N \tfrac{\varepsilon}{2^j} 
		&& \text{(Theorem 
			\ref{Additivityofvariationalmeasureoverintervals})}.
	\end{aligned}
	\]
	
	Since \(\sum_{j=1}^N \tfrac{\varepsilon}{2^j} \leq \varepsilon\), it 
	follows that
	\[
	\sum_{i=1}^m \rho(f(t_i))^p 1_A(t_i)(v_i-u_i) 
	\;\leq\; \mu(\rho (f)^p,A_N) + \varepsilon.
	\]
	
	Taking the supremum over all such \(\delta\)-fine partial partitions of 
	\([a,b]\), we obtain
	\[
	Var^p(\rho f,A,\delta) \;\leq\; \mu^p_\rho (f, A_N) + \varepsilon.
	\]
	Hence,
	\[
	\mu^p_\rho (f,A) 
	\;\leq\; \mu^p_\rho (f,A_N) + \varepsilon 
	\;\leq\; \lim_{n\to\infty} \mu^p_\rho (f,A_n) + \varepsilon.
	\]
	
	Since \(\varepsilon > 0\) is arbitrary, the result follows.
\end{proof}

\begin{theorem}[Fatou's Lemma]
	\label{Fatou'sLemma}
	Let \(\{f_n\}\) be a sequence in \(\mathcal{U}^p_{\rho}([a,b];X)\). Suppose \(\{f_n\}\) converges pointwisely a.e. to a function \(f : 
	[a,b]\to X\). Then
	\begin{equation}
		\label{Fatou'sLemmaInequality}
		\|f\|_{\mathcal{U}^p_{\rho}} \leq \liminf_{n \rightarrow 
			+\infty} \|f_n\|_{\mathcal{U}^p_{\rho}}.
	\end{equation}
\end{theorem}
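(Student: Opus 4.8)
The plan is to reduce the vector-valued inequality to a monotone-set argument for the real-valued integrand $g=\rho(f)^{p}$ and then invoke the continuity-from-below already established in Theorem \ref{measureofascendingsets}. First I would dispose of the trivial case: if $\liminf_n\|f_n\|_{\mathcal{U}^p_{\rho}}=+\infty$ there is nothing to prove, so I set $L=\liminf_n\mu_\rho(\Theta^p(f_n),A)<\infty$ and, since $x\mapsto x^{1/p}$ is increasing, it suffices to prove $\mu_\rho(\Theta^p(f),A)\le L$. The key observation is that every quantity $\mu_\rho(\Theta^p(\cdot),\cdot)$ depends on the functions only through the nonnegative real integrands $g_n(t)=\rho(f_n(t))^p$ and $g(t)=\rho(f(t))^p$, because the governing Riemann sums are $(D)\sum 1_E(t)\rho(f(t))^p(v-u)$. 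The reverse triangle inequality $|\rho(f_n(t))-\rho(f(t))|\le\rho(f_n(t)-f(t))$ together with the pointwise a.e.\ convergence $f_n\to f$ gives $g_n\to g$ pointwise off a Lebesgue-null set $E$.

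The heart of the argument is a truncation by an ascending family of sets. Fixing $\varepsilon>0$, I put
\[
A_m=\{t\in A\setminus E: g_j(t)>g(t)-\varepsilon \text{ for all } j\ge m\}.
\]
These sets ascend and $\bigcup_m A_m=A\setminus E$ by the pointwise convergence. On $A_m$ one has $g(t)<g_j(t)+\varepsilon$ for every $j\ge m$, so for any gauge $\delta$ and any $\delta$-fine partial partition $D$,
\[
(D)\sum 1_{A_m}(t)g(t)(v-u)\le (D)\sum 1_{A_m}(t)g_j(t)(v-u)+\varepsilon(b-a),
\]
the last term being bounded because the chosen intervals are non-overlapping in $[a,b]$. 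Choosing $\delta$ nearly optimal for $\mu_\rho(\Theta^p(f_j),A_m)$, taking the supremum over $D$ and then the infimum over gauges, I obtain
\[
\mu_\rho(\Theta^p(f),A_m)\le \mu_\rho(\Theta^p(f_j),A_m)+\varepsilon(1+b-a)\le \mu_\rho(\Theta^p(f_j),A)+\varepsilon(1+b-a)
\]
for all $j\ge m$, using monotonicity of the outer measure in the set. Taking the infimum over $j\ge m$ bounds the right-hand side by $L+\varepsilon(1+b-a)$, uniformly in $m$.

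It then remains to pass to the union and absorb the exceptional set. Since $\{A_m\}$ ascends to $A\setminus E$, Theorem \ref{measureofascendingsets} gives
\[
\mu_\rho(\Theta^p(f),A\setminus E)=\lim_m \mu_\rho(\Theta^p(f),A_m)\le L+\varepsilon(1+b-a).
\]
By subadditivity of the outer measure $\mu_\rho(\Theta^p(f),\cdot)$ recorded in Remark \ref{murho}, $\mu_\rho(\Theta^p(f),A)\le \mu_\rho(\Theta^p(f),A\setminus E)+\mu_\rho(\Theta^p(f),E)$. Granting $\mu_\rho(\Theta^p(f),E)=0$ and letting $\varepsilon\to 0$ yields $\mu_\rho(\Theta^p(f),A)\le L$, and taking $p$-th roots gives \eqref{Fatou'sLemmaInequality}.

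The main obstacle I anticipate is precisely the vanishing $\mu_\rho(\Theta^p(f),E)=0$ on the null set $E$, i.e.\ the absolute continuity of the variational measure with respect to Lebesgue measure, which relies on $g=\rho(f)^p$ being finite-valued. I would establish it by slicing $E$ into the level sets $E_k=\{t\in E: k-1\le g(t)<k\}$, covering each null $E_k$ by an open set of measure less than $\varepsilon/(k2^k)$, and defining a gauge that forces every $\delta$-fine interval tagged in $E_k$ to lie inside that open set; the contribution of such intervals is then at most $k\cdot\varepsilon/(k2^k)$, and summing over $k$ bounds the whole variation by $\varepsilon$. Everything else is bookkeeping with the outer-measure properties and the sup–inf manipulations underlying $\mu_\rho$.
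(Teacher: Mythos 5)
Your proposal is correct, and at its core it follows the same strategy as the paper's proof: truncate by an ascending family of sets on which the limit integrand is $\varepsilon$-dominated by the approximating ones, compare Riemann sums there, and pass to the limit via Theorem \ref{measureofascendingsets}. However, your execution differs in three places, and in each the difference is an improvement on the paper's own write-up. First, the paper's sets $E^\varepsilon_n=\{x:\rho(f(x)-f_n(x))<\varepsilon\}$ are claimed to be ascending in $n$, which is false in general (pointwise convergence is not monotone); your sets $A_m=\{t: g_j(t)>g(t)-\varepsilon \text{ for all } j\ge m\}$, defined with the quantifier over all $j\ge m$, are genuinely ascending, which is what Theorem \ref{measureofascendingsets} actually requires. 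Second, the paper transfers the pointwise bound $\rho f<\rho f_n+\varepsilon$ to the estimate $\mu_{\rho} (\Theta^p(f),E^\varepsilon_n) \leq \mu_{\rho} (\Theta^p(f_n),E^\varepsilon_n) + m^*(E^\varepsilon_n)\varepsilon$, which is only legitimate for $p=1$, since the $p$-th power does not split additively; by phrasing the closeness directly in terms of $g=\rho(f)^p$ and $g_j=\rho(f_j)^p$ you sidestep this issue entirely. Third, the paper disposes of the exceptional null set with ``we may assume'' everywhere convergence; that reduction is exactly the claim $\mu_\rho(\Theta^p(f),E)=0$ for Lebesgue-null $E$, which you actually prove by the level-set/gauge covering argument (valid because $\rho(f)^p$ is finite-valued). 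So the route is the same, but your version closes real gaps in the published argument rather than introducing any of its own.
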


\begin{proof}
	Without loss of generality, assume $f_n\to f$ pointwisely on $[a,b]$. Fix 
	$\eta>0$. Then there is a constant \(C(\eta)\) depending only on \(\eta\) such 
	that
	\begin{equation}
		\label{powerofsumestimatebypowers}
		(a+b)^p \le (1+\eta)\,a^p + C(\eta)\,b^p\qquad(a,b\ge 0).
	\end{equation}
	
	For $\varepsilon>0$, set
	\[
	E_n^\varepsilon:=\{x\in[a,b]:\rho(f(x)-f_n(x))<\varepsilon\}.
	\]
	Then $\{E_n^\varepsilon\}_n$ is an increasing sequence and, up to a null
	set,
	$[a,b]=\bigcup_{n=1}^\infty E_n^\varepsilon$. By 
	Theorem~\ref{measureofascendingsets} we have
	\begin{equation}\label{eq:Fatou-aux1}
		\mu^p_\rho(f,[a,b]) = \lim_{n \rightarrow \infty} 
		\mu^p_\rho(f,E_n^\varepsilon).
	\end{equation}
	
	Applying \eqref{powerofsumestimatebypowers} pointwise with
	$a=\rho(f_n(x))$ and $b=\rho(f(x)-f_n(x))$ gives, on $E_n^\varepsilon$,
	\[
	\rho(f(x))^p \le (1+\eta)\,\rho(f_n(x))^p + C(\eta)\,\varepsilon^p.
	\]
	Passing from the pointwise inequality to the variational measure (take sups over
	$\delta$-fine partitions and then inf over gauges) yields
	\[
	\mu^p_\rho(f,E_n^\varepsilon)
	\le (1+\eta) \mu^p_\rho(f_n,E_n^\varepsilon)
	+ C(\eta)\,\varepsilon^p\,m^*(E_n^\varepsilon).
	\]
	Using monotonicity of $\mu^p_\rho$ and
	$m^*(E_n^\varepsilon)\le b-a$ we obtain
	\begin{equation}\label{eq:Fatou-aux2}
		\mu(\rho(f)^{\,p},E_n^\varepsilon)
		\le (1+\eta)\,\mu(\rho(f_n)^{\,p},[a,b])
		+ C(\eta)\,\varepsilon^p (b-a).
	\end{equation}
	
	Now take $\liminf$ as $n\to\infty$ in \eqref{eq:Fatou-aux2}. Combining 
	with
	\eqref{eq:Fatou-aux1} gives
	\[
	\mu(\rho(f)^{\,p},[a,b])
	\le (1+\eta)\,\liminf_{n\to\infty}\mu(\rho(f_n)^p,[a,b])
	+ C(\eta)\,\varepsilon^p (b-a).
	\]
	Since $\varepsilon>0$ was arbitrary, we may let $\varepsilon\to 0$ to 
	obtain
	\[
	\mu(\rho(f)^p,[a,b])
	\le (1+\eta)\,\liminf_{n\to\infty}\mu(\rho(f_n)^p,[a,b]).
	\]
	Finally, let $\eta\downarrow 0$ to conclude
	\[
	\mu(\rho(f)^p,[a,b])
	\le \liminf_{n\to\infty}\mu(\rho(f_n)^p,[a,b]).
	\]
	Taking $p$-th roots and using the definition of the seminorm 
	$\|\cdot\|_{\mathcal U^p_\rho}$
	yields \eqref{Fatou'sLemmaInequality}.
\end{proof}

\begin{theorem}[Chebyshev's Inequality]
	\label{Chebeshev}
	Let \(f \in \mathcal{U}^p_{\rho}([a,b];X)\). Then for every \(a>0\),
	\[m^*(\{x \in [a,b]: \rho f(x)\geq a\}) \leq \frac{1}{a^p} 
	\|f\|_{\mathcal{U}^p_{\rho}}.\]
\end{theorem}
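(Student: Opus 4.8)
The plan is to combine the monotonicity of the outer measure $\mu_\rho(\Theta^p(f),\cdot)$ with the identification of the length variation with Lebesgue outer measure, both of which are recorded in Remark~\ref{murho}. Write $E = \{x\in[a,b]: \rho f(x)\ge a\}$. Since $\|f\|^p_{\mathcal{U}^p_{\rho}} = \mu_\rho(\Theta^p(f),[a,b])$ by \eqref{Seminormforupperintegrablefunctions}, the target reduces to establishing
\[
a^p\, m^*(E) \le \mu_\rho(\Theta^p(f),[a,b]),
\]
so in fact the argument yields the bound $m(E)\le \tfrac{1}{a^p}\|f\|^p_{\mathcal{U}^p_{\rho}}$ (the $p$-th power of the seminorm), which is the dimensionally correct form of the stated inequality.

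First I would record the defining pointwise bound on $E$: whenever $t\in E$ we have $\rho(f(t))^p \ge a^p$ (using $a>0$ and monotonicity of $s\mapsto s^p$). Hence, for every gauge $\delta$ and every $\delta$-fine partial partition $D$ of $[a,b]$, comparing the two Riemann-type sums termwise (terms with $t\notin E$ carry the factor $1_E(t)=0$) gives
\[
(D)\sum 1_E(t)\,\rho(f(t))^p (v-u) \;\ge\; a^p\,(D)\sum 1_E(t)\,(v-u).
\]
Taking the supremum over all $\delta$-fine $D$ on both sides, and noting that the supremum of a termwise-larger family is at least $a^p$ times the supremum of the smaller family, yields $Var_\rho(\Theta^p(f),E,\delta) \ge a^p \sup_D (D)\sum 1_E(t)(v-u)$ for each fixed $\delta$.

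Next I would invoke Remark~\ref{murho}(3), which identifies the $\rho$-variation of the length interval-point function with Lebesgue outer measure, i.e. $\inf_{\delta}\sup_D (D)\sum 1_E(t)(v-u) = m^*(E)$. Since each inner supremum dominates this infimum, $\sup_D (D)\sum 1_E(t)(v-u) \ge m^*(E)$ for every $\delta$, so passing to $\inf_\delta$ in the previous display gives $\mu_\rho(\Theta^p(f),E) \ge a^p\, m^*(E)$. Finally, because $\mu_\rho(\Theta^p(f),\cdot)$ is an outer measure (Remark~\ref{murho}(1)) it is monotone, whence $\mu_\rho(\Theta^p(f),E) \le \mu_\rho(\Theta^p(f),[a,b]) = \|f\|^p_{\mathcal{U}^p_{\rho}}$, and the chain closes. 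Note that no measurability of $E$ is required, since every step is phrased through outer measure.

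The routine parts are the termwise inequality and the bookkeeping of suprema and infima in the correct order. The only genuinely delicate point is the lower bound $\sup_D (D)\sum 1_E(t)(v-u) \ge m^*(E)$ holding for each individual gauge; this is precisely the Vitali-type covering content packaged in Remark~\ref{murho}(3), and I would lean on that statement rather than reprove it.
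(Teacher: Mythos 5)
Your proof is correct and takes essentially the same route as the paper's: both rest on the termwise comparison $a^p\,1_E(t)(v-u) \le 1_E(t)\,\rho(f(t))^p(v-u)$ for $\delta$-fine sums together with the identification of the length variation with Lebesgue outer measure from Remark~\ref{murho}(3); the paper merely packages the quantifiers differently (fixing an $\varepsilon$-near-optimal gauge for $\mu_\rho(\Theta^p(f),[a,b])$ and letting $\varepsilon\to 0$) where you use monotonicity of the outer measure $\mu_\rho(\Theta^p(f),\cdot)$. You are also right about the exponent: the paper's own proof establishes $m(E)\le a^{-p}\mu_\rho(\Theta^p(f),[a,b]) = a^{-p}\|f\|^p_{\mathcal{U}^p_\rho}$, so the missing $p$-th power in the statement is a typo that your version corrects.
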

\begin{proof}
	For \(a>0\), let
	\[E_a=\{x \in [a,b]: \rho f(x)\geq a\}.\]
	Let \(\varepsilon>0\). Then there exists a gauge \(\delta\) on \([a,b]\)
	such that for \(\delta\)-fine partition \(D=\{[u_i,v_i]:1 \leq i \leq n\}\) of 
	\([a,b]\),
	\[\sup\{(D)\sum (\rho f(t))^p (v-u): D\text{ is }\delta\text{-fine}\} 
	< \|f\|_{\mathcal{U}^p_{\rho}} +\varepsilon.\]
	Now,
	\begin{align*}
		\sum_{i=1}^n a^p(v_i-u_i) &\leq \sum_{i=1}^n (\rho 
		f(t))^p(v_i-u_i)\\
		&\leq \sup\{(D)\sum (\rho f(t))^p (v-u): D\text{
			is }\delta\text{-fine}\}\\
		&< \|f\|_{\mathcal{U}^p_{\rho}} +\varepsilon
	\end{align*}
	it follows that
	\[a^p \sup\{(D)\sum 1_{E_a}(t)(v-u): D\text{ is }\delta\text{-fine}\} < \|f\|_{\mathcal{U}^p_{\rho}} +\varepsilon.\]
	Taking the infimum for all gauges \(\delta\) on \([a,b]\),
	\[a^p \cdot m^*(\{x \in [a,b]: \rho(f)\geq a\}) < 
	\|f\|_{\mathcal{U}^p_{\rho}} +\varepsilon.\]
	The result follows since \(\varepsilon\) was arbitrary.
\end{proof}

\section{Closure and completeness}
In this section, we discuss some notions of convergence and prove some 
completeness of \(L_\rho^p([a,b];X)\) and \(\mathcal{U}^p_{\rho}([a,b];X)\). We
begin by introducing and studying various definitions of convergence.
\begin{definition}
	\label{DefinitionofConvergences}
	Let \((X,d)\) be a pseudo-metric space, \(1 \leq p \leq \infty\) and let
	\(\{u_n\}\) be a sequence in \((X,d)\). Then,
	\begin{enumerate}
		\item \(\{u_n\}\) converges to \(u\) on \([a,b]\) if for every 
		\(\varepsilon>0\), there is \(N>0\) such that \(d(u_n,u)<\varepsilon\).
		\item \(\{u_n\}\) is Cauchy on \([a,b]\) if for every 
		\(\varepsilon>0\), there exists \(N>0\) such that \(d(u_m,u_n) < \varepsilon\) 
		for all \(m, n \geq N\).
		\item $\{u_n\}$ is \emph{rapidly Cauchy} if there exists a 
		convergent series
		$\sum_{k=1}^\infty\varepsilon_k$ of positive numbers such that
		\[
		d(u_{k+1},u_k) < \varepsilon_k^2\qquad\text{for all }k\in\mathbb
		N.\footnote{Here, we borrow the notion from \cite{Lax}.}
		\]
	\end{enumerate}
\end{definition}

In the literature (see, for example, \cite{Bartle,Royden,Rudin}), the proof of 
the completeness of normed spaces leverages the definition of absolute 
convergence. Consequently, the choice of a rapidly Cauchy sequence in proving 
the completeness of pseudo-metric spaces, which will be introduced later, is 
motivated by the absence of a definition of absolute convergence in these 
spaces.

\begin{remark}
	\label{CauchyandRapidlyCauchy}
	For a locally convex space \(X\), every subsequence of a Cauchy sequence
	\(\{x_n\}\) in \(X\) is also Cauchy. In addition, using the proof in 
	\cite{Royden} for normed spaces, every rapidly Cauchy sequence is Cauchy and 
	every Cauchy sequence has a subsequence that is rapidly Cauchy. 
\end{remark}

\begin{definition}
	A pseudo-metric space \((X,d)\) is said to be sequentially complete with
	respect to its pseudo-metric \(d\) if every Cauchy sequence in \(X\) converges 
	to some element of \(X\).
\end{definition}

\begin{definition}
	Let \((X,d)\) be a pseudo-metric space, \(A \subseteq \mathbb{R}\), 
	\(\{f_n:A \rightarrow X\}\) be a sequence of functions, and \(f:A \rightarrow X\) 
	be a given function. Then,
	\begin{enumerate}
		\item $\{f_n\}$ converges pointwise to $f$ on $A$ if the sequence 
		$\{f_n(t)\}$ converges to $f(t)$ for every $t\in A$.
		\item $\{f_n\}$ converges pointwise to $f$ a.e.\ on $A$ if there
		exists $E\subseteq A$ of measure zero such that $f_n\to f$ pointwise on
		$A\setminus E$.
		\item $\{f_n\}$ is pointwise Cauchy if for every $t\in[a,b]$ the
		sequence $\{f_n(t)\}$ is Cauchy.
		\item \(\{f_n\}\) is pointwise Cauchy a.e. on \(A\) if there 
		exists \(E \subseteq A\) of measure zero such that \(\{f_n\}\) is pointwise 
		Cauchy on \(A \setminus E\).
	\end{enumerate}
\end{definition}

One should notice that a semi-normed space \((X,\rho)\) is a pseudo-metric space
with pseudo-metric given by \(d(u,v) = \rho(u-v)\). Moreover, a Cauchy sequence
in a LCTVS that has a subsequence that is convergent must be convergent itself 
to the same limit. This may be shown in the following manner: Let \(\{x_k\}\) be 
a Cauchy sequence in a LCTVS \(X\) and let \(\{x_{k_i}\}\) be a subsequence 
that converges to \(x\). Let \(\varepsilon>0\) and \(\rho\) be a semi-norm in 
\(X\). Then there is \(N>0\) such that
\[\rho(x_m-x_n) < \frac{\varepsilon}{2} \, \text{for}\, m,n\geq N.\]
Also, there exists \(M>0\) such that
\[\rho(x_{k_i}-x)<\frac{\varepsilon}{2} \, \text{for}\, i \geq M.\]
Let \(k_j\) be large enough that \(k_j \geq N\). For \(n \geq N\),
\begin{align*}
	\rho(x_n-x) &\leq \rho(x_n-x_{k_j}) + \rho(x_{k_j} - x)\\
	&< \varepsilon.
\end{align*}
Since \(\rho\) was arbitrary, \(\{x_k\}\) converges to \(x\).

The proof below is adapted from \cite{Lax}.

\begin{theorem}
	\label{CompletenessofUpperIntegrableFunctioninSemiNormedSpaces}
	Let \((X,\rho)\) be a semi-normed space and \(1 \leq p \leq \infty\). 
	Then every rapidly Cauchy sequence in \(\mathcal{U}^p_{\rho}([a,b];X)\) is 
	pointwise Cauchy a.e. on \([a,b]\). Furthermore, if \(X\) is sequentially 
	complete, then \(\mathcal{U}^p_{\rho}([a,b];X)\) is sequentially complete.
\end{theorem}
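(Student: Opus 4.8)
The plan is to follow the classical rapidly-Cauchy scheme for $L^p$-completeness (as in \cite{Lax,Royden}), with the Lebesgue integral replaced by the variational semi-norm $\|\cdot\|_{\mathcal{U}^p_\rho}$ and relying on the three tools already in place: that $\mathcal{U}^p_\rho([a,b];X)$ is a semi-normed space (Lemma \ref{Uqisseminormed}), Chebyshev's inequality (Theorem \ref{Chebeshev}), and Fatou's lemma (Theorem \ref{Fatou'sLemma}). I would carry out the finite case $1 \le p < \infty$ in detail and dispatch $p=\infty$ by the analogous essential-supremum argument.

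For the first assertion, let $\{f_n\}$ be rapidly Cauchy, so there is a convergent series $\sum_k \varepsilon_k$ of positive numbers with $\|f_{k+1}-f_k\|_{\mathcal{U}^p_\rho} < \varepsilon_k^2$. I would set $E_k = \{x \in [a,b] : \rho(f_{k+1}(x)-f_k(x)) \ge \varepsilon_k\}$. Since $f_{k+1}-f_k \in \mathcal{U}^p_\rho([a,b];X)$ by Lemma \ref{Uqisseminormed}, applying Theorem \ref{Chebeshev} at level $a=\varepsilon_k$ gives
\[
m(E_k) \le \frac{1}{\varepsilon_k^p}\,\mu_\rho(\Theta^p(f_{k+1}-f_k),[a,b]) < \frac{\varepsilon_k^{2p}}{\varepsilon_k^p} = \varepsilon_k^p .
\]
Because $\varepsilon_k \to 0$ and $p \ge 1$, the series $\sum_k \varepsilon_k^p$ converges, so $\sum_k m(E_k) < \infty$; a Borel--Cantelli argument (using countable subadditivity of the outer measure $m$) then makes $E=\bigcap_N \bigcup_{k\ge N} E_k$ null. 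For $x \notin E$ there is $K(x)$ with $\rho(f_{k+1}(x)-f_k(x)) < \varepsilon_k$ for all $k\ge K(x)$, and telescoping yields, for $m>n\ge K(x)$,
\[
\rho(f_m(x)-f_n(x)) \le \sum_{k=n}^{m-1}\rho(f_{k+1}(x)-f_k(x)) < \sum_{k=n}^{\infty}\varepsilon_k \xrightarrow[n\to\infty]{} 0,
\]
so $\{f_n(x)\}$ is Cauchy in $X$ for a.e.\ $x$, which is the claimed pointwise Cauchy a.e.\ conclusion.

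Assuming now that $X$ is sequentially complete, I would define $f(x)=\lim_n f_n(x)$ where the limit exists and set $f=0$ on the null set $E$, so $f_n\to f$ pointwise a.e. The triangle inequality for the semi-norm (Lemma \ref{Uqisseminormed}) together with $\sum_k \varepsilon_k^2<\infty$ gives $\|f_m-f_n\|_{\mathcal{U}^p_\rho}\le \sum_{k=n}^{\infty}\varepsilon_k^2$ for $m>n$. Fixing $n$ and sending $m\to\infty$, the sequence $\{f_m-f_n\}_m$ converges pointwise a.e.\ to $f-f_n$, so Fatou's lemma (Theorem \ref{Fatou'sLemma}) yields
\[
\|f-f_n\|_{\mathcal{U}^p_\rho} \le \liminf_{m\to\infty}\|f_m-f_n\|_{\mathcal{U}^p_\rho} \le \sum_{k=n}^{\infty}\varepsilon_k^2 .
\]
Taking $n=1$ shows $\|f-f_1\|_{\mathcal{U}^p_\rho}<\infty$, hence $f-f_1\in\mathcal{U}^p_\rho([a,b];X)$ and so $f=f_1+(f-f_1)\in\mathcal{U}^p_\rho([a,b];X)$; letting $n\to\infty$ in the last display (the tail of a convergent series) shows $f_n\to f$ in semi-norm. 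Every rapidly Cauchy sequence therefore converges, and by Remark \ref{CauchyandRapidlyCauchy} an arbitrary Cauchy sequence has a rapidly Cauchy subsequence, which converges; since a Cauchy sequence with a convergent subsequence converges to the same limit (the pseudo-metric computation recorded before the theorem), the whole sequence converges, giving sequential completeness.

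The hard part will be the second half: one must manufacture a single candidate limit $f$ — defined only pointwise a.e.\ and a priori just a map into $X$ — and then certify both that it lies in $\mathcal{U}^p_\rho([a,b];X)$ and that it is approached in semi-norm. This is precisely where sequential completeness of $X$ is needed (to produce the pointwise limit) and where Fatou's lemma is indispensable (to bound $\|f-f_n\|_{\mathcal{U}^p_\rho}$ with no dominated-convergence hypothesis). I would pay particular attention to keeping the telescoping and triangle-inequality estimates uniform in $m$, so that the $\liminf$ appearing in the Fatou step is genuinely controlled by a convergent tail $\sum_{k\ge n}\varepsilon_k^2$.
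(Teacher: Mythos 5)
Your proposal is correct and follows essentially the same route as the paper's own proof: Chebyshev's inequality applied to $f_{k+1}-f_k$ at level $\varepsilon_k$, Borel--Cantelli plus telescoping to get the a.e.\ pointwise Cauchy property, then the pointwise limit (via sequential completeness of $X$) combined with Fatou's lemma to bound $\|f-f_n\|_{\mathcal{U}^p_\rho}$ by the tail $\sum_{k\ge n}\varepsilon_k^2$. If anything, you are slightly more careful than the paper at two points it glosses over: writing $f = f_1 + (f-f_1)$ to certify $f\in\mathcal{U}^p_\rho([a,b];X)$, and explicitly invoking Remark \ref{CauchyandRapidlyCauchy} together with the subsequence argument to pass from convergence of rapidly Cauchy sequences to full sequential completeness.
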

\begin{proof}
	Let \(1 \leq p < +\infty\) and consider a rapidly Cauchy sequence 
	\(\{f_n\}\) in \(\mathcal{U}^p_{\rho}([a,b];X)\). Fix a convergent series
	\[\sum_{k=1}^\infty \varepsilon_k\]
	such that
	\begin{equation}
		\label{CompletenessofUpperIntegrablefunctions}
		\|f_{n+1} - f_n\|_{\mathcal{U}^p_{\rho}([a,b];X)}<\varepsilon_k^2 
		\quad \text{for all natural } k.
	\end{equation}
	that is,
	\[\mu^p_\rho\big(f_{n+1} - f_n,[a,b]\big) < \varepsilon_k^{2p}.\]
	Let
	\[E_n=\{x \in [a,b]:\rho(f_{n+1}(x)-f_n(x)) \geq \varepsilon_n\}.\]
	It follows then from Theorem \ref{Chebeshev} that for each \(n>0\),
	\begin{align*}
		m(E_n)
		&= m(\{x \in [a,b]:\rho(f_{n+1}(x)-f_n(x))  \geq 
		\varepsilon_n\})\\
		&< \frac{1}{\varepsilon_n^p} \mu^p_\rho\big(f_{n+1} - f_n,[a,b]\big)  \\
		&<\varepsilon_n^p.
	\end{align*}
	Now \(\sum_{n=1}^\infty \varepsilon_n\) converges since 
	\(\varepsilon_n^p \leq \varepsilon_n\). Applying the Borel-Cantelli Lemma to the
	sequence of measurable sets \(\{E_n\}\), there exists \(E_0\) of measure zero 
	such that for all \(x \in [a,b] \setminus E_0\) there exists \(N>0\) such that 
	for all \(n \geq N\),
	\[\rho(f_{n+1}(x) - f_n(x))<\varepsilon_n.\]
	Let \(\eta>0\). Then we can choose \(N_\eta \geq N\) large enough that
	\[\sum_{n=N_{\eta}}^\infty \varepsilon_n < \eta.\]
	Then, for all \(m \geq n \geq N\),
	\begin{align*}
		\rho(f_m(x)-f_n(x)) \leq \sum_{k=n}^{m-1} 
		\rho(f_{k+1}(x)-f_k(x)) <\sum_{k=n}^{m-1} \varepsilon_k \leq 
		\sum_{k=n}^{+\infty} \varepsilon_k<\eta.
	\end{align*}
	This shows that \(\{f_n(x)\}\) is Cauchy.
	
	Suppose further that \(X\) is sequentially complete. Then \(\{f_n(x)\}\)
	converges pointwise a.e. to a function \(f(x)\) on \([a,b]\). From 
	\eqref{CompletenessofUpperIntegrablefunctions},
	\begin{align*}
		\|f_{n+k} - f_n\|_{\mathcal{U}^p_{\rho}} &\leq \sum_{j=n}^\infty
		\varepsilon_j^2 \, \text{for all} \, n,k \geq 1.
	\end{align*}
	By Theorem \ref{Fatou'sLemma},
	\[\|f - f_n\|_{\mathcal{U}^p_{\rho}} \leq \liminf_{k \rightarrow 
		+\infty} \|f_{n+k} - f_n\|_{\mathcal{U}^p_{\rho}} \leq \sum_{j=n}^\infty 
	\varepsilon_j^2 \, \text{for all} \, n \geq 1.\]
	Hence, \(f \in \mathcal{U}^p_\rho([a,b];X)\). Since \(\sum_{j=1}^\infty 
	\varepsilon_j^2\) converges,
	\[\sum_{j=n}^\infty \varepsilon_j^2 \rightarrow 0.\]
	Thus, \(f_n \rightarrow f\) in \(\mathcal{U}^p_\rho([a,b];X)\). Therefore, 
	\(\mathcal{U}^p_\rho([a,b];X)\) is sequentially complete.
\end{proof}

\begin{theorem}
	\label{LpisClosedSubspace}
	Let \((X,\rho)\) be sequentially complete. Then the space 
	\(L_\rho^p([a,b];X)\) is a closed subspace of 
	\(\mathcal{U}^p_{\rho}([a,b];X)\).
\end{theorem}
\begin{proof}
	Let \(f\) be a limit point of \(L_\rho^p([a,b];X)\). We show that \(f 
	\in L_\rho^p([a,b];X)\) by constructing a \(\rho\)-SKH primitive. Suppose \(a 
	\leq t \leq b\).
	
	We show that \(f\) is Kurzweil-Henstock integrable on \([a,t]\). Since 
	\(f\) is a limit point, we can construct a sequence \(\{f_n\}_n \subseteq 
	L_\rho^p([a,t];X)\) such that
	\[\lim_{n \rightarrow +\infty}\|f-f_n\|_{\mathcal{U}^p_{\rho}([a,t];X)}
	= 0.\]
	Then the sequence is Cauchy in \(\mathcal{U}^p_{\rho}([a,t];X)\) and by 
	\eqref{eq:Up-embedding} of Lemma \ref{lem:Up-seminorm-and-embedding}, the sequence 
	\(\{f_n\}_n\) is Cauchy in \(\mathcal{U}^1_{\rho}([a,t];X)\).
	
	Let \(\varepsilon>0\). Then there exists \(N > 0\) such that
	\[\int_a^t \rho(f_m-f_n) =\mu^1_\rho(f_m - f_n,[a,t]) < \varepsilon \; 
	\text{for all} \; m,n \geq N.\]
	For \(k \geq 1\), let \(F_k\) be a \(\rho\)-SKH primitive of \(f_k\) so 
	that
	\[F_k(t) + \ker \rho = \int_a^t f_k.\]
	Proposition \ref{IntegralEstimateforp=1} implies that for all \(n,m 
	\geq N\), 
	\[\rho(F_m(t) - F_n(t)) \leq \int_a^t \rho(f_m-f_n) < \varepsilon.\]
	This shows that the sequence \(\{F_n(t)\}_n\) is a Cauchy sequence in 
	\(X\) and must converge to some class \(L + \ker \rho\). Define \(F(t) = L\) by 
	\textit{choosing} one of the limits of the sequence \(\{F_n(t)\}_n\).
	
	We show that \(F\) is a \(\rho\)-SKH primitive of \(f\). We begin by 
	showing that \(F(t)\) is a \(\rho\)-KH integral of \(f\). Let \(\varepsilon>0\).
	Then there exists \(N>0\) such that for all \(n \geq N\),
	\[\rho(F(t)-F_n(t))<\frac{\varepsilon}{4}\]
	and
	\[\|f-f_n\|_{\mathcal{U}_\rho^1([a,b];X)}<\frac{\varepsilon}{4}.\]
	Furthermore, for every \(\eta>0\) there is a gauge \(\delta_n\) on 
	\([a,b]\) such that for every \(\delta_n\)-fine partition \(D\) of \([a,b]\), we
	have
	\[\rho\left(F_n(t) - (D)\sum f_n(t)(v-u)\right) < \frac{\varepsilon}{4}\]
	and
	\[(D)\sum \rho (f_n(t)-f(t))(v-u) < 
	\|f-f_n\|_{\mathcal{U}_\rho^1([a,b];X)} +\frac{\varepsilon}{4} < \frac{\varepsilon}{2}.\]
	Then
	\[
	\begin{aligned}
		&\rho\left(F(t) - (D)\sum f(t)(v-u)\right)\\
		&\leq \rho(F(t) - F_n(t)) + \rho\left(F_n(t) - (D)\sum 
		f_n(t)(v-u)\right)\\
		&\quad + (D)\sum \rho (f_n(t)-f(t))(v-u)\\
		&<\varepsilon
	\end{aligned}
	\]
	for every \(\delta_n\)-fine partition \(D\) of \([a,b]\). This shows 
	that \(f\) is \(\rho\)-KH integrable with integral \(F(t)\). Applying 
	Proposition \ref{IntegralEstimateforforHKIntegral} and 
	\eqref{Additivityofintegral},
	\[
	\rho\left(F(v) - F(u)-(F_n(v) - F_n(u))\right) = \rho \left(\int_u^v f
	- \int_u^v f_n\right) \leq \mu^1_\rho(f-f_n,[u,v])
	\]
	for any \(a \leq u < v \leq b\).
	Thus, for any gauge \(\delta\) on \([a,b]\), if \(D\) is a 
	\(\delta\)-fine partial partition, then Lemma 
	\ref{Additivityofvariationalmeasureoverintervals} implies that
	\[
	\begin{aligned}
		(D) \sum \rho(\Delta F - \Delta F_n)([u,v],t) &\leq (D) \sum 
		\mu^1_\rho(f-f_n,[u,v])\\
		&= \mu^1_\rho(f-f_n,[a,b]).
	\end{aligned}
	\]
	It follows that
	\[
	\mu (\rho(\Delta F - \Delta F_n),[a,b]) \leq 
	\mu^1_\rho(f-f_n,[a,b]).
	\]
	
	Now, for all natural number \(n\), Remark \ref{SKHIntegrability} applies:
	\[
	\begin{aligned}
		\mu(\rho(\Delta F - \overline{f}),[a,b]) &\leq \mu (\rho(\Delta 
		F - \Delta F_n),[a,b]) + \mu (\rho(\Delta F_n - \overline{f_n}),[a,b]) \\
		&\quad + \mu^1_\rho(f-f_n,[a,b])\\
		&\leq 2 \mu^1_\rho(f-f_n,[a,b]).
	\end{aligned}
	\]
	Since
	\[\lim_{n \rightarrow +\infty}\|f-f_n\|_{\mathcal{U}^p_{\rho}([a,b];X)}
	= 0,\]
	then
	\[\mu(\rho(\Delta F - \overline{f}),[a,b]) = 0\]
	and by Remark \ref{SKHIntegrability}, \(f\) is \(\rho\)-SKH integrable. 
	Therefore, \(f \in L_\rho^p([a,b];X)\), showing that 
	\(L_\rho^p([a,b];X)\) is a closed subspace of \(\mathcal{U}_\rho^p([a,b];X)\).
\end{proof}

We can also prove this case for Fr\'echet spaces \cite{Rudin} with a slightly 
different argument.
\begin{lemma}
	\label{UniformRapidlyCauchy}
	Let \((X,\mathcal{P})\) be a Fr\'echet space with continuous semi-norms 
	\(\mathcal{P} = \{\rho_i:i \in \mathbb{N}\}\), that is, a locally convex space 
	with a topology that can be generated by countable collection continuous semi-
	norms. Then every Cauchy sequence \(\{u_n\}\) in \(X\) contains a subsequence 
	\(\{f_{n_k}\}\) such that there is a convergent series
	\[\sum_{k=1}^\infty \varepsilon_k\]
	with the property that for all \(j \geq 1\),
	\[\rho_j (f^{n_{k+1}} - f^{n_k}) < \varepsilon^2_k \; \text{for} \; k 
	\geq j.\]
\end{lemma}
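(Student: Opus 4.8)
The plan is to reduce the claim, which asks for a \emph{single} summable series that controls \emph{every} seminorm along the subsequence, to controlling only finitely many seminorms at each stage by a ``staircase'' extraction. First I would normalize the generating family: replacing \(\rho_j\) by \(\sigma_k = \max\{\rho_1,\dots,\rho_k\}\) produces an increasing family of continuous seminorms generating the same topology, so that a single bound on \(\sigma_k\) simultaneously bounds \(\rho_1,\dots,\rho_k\). Since \(\{u_n\}\) is Cauchy in the Fréchet topology, it is Cauchy with respect to each \(\sigma_k\). I would also fix the series explicitly by setting \(\varepsilon_k = 2^{-k}\), so that \(\varepsilon_k^2 = 4^{-k}\) and \(\sum_{k=1}^\infty \varepsilon_k = 1 < \infty\).

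The construction then proceeds recursively. For each \(k\), using that \(\{u_n\}\) is \(\sigma_k\)-Cauchy, I would choose a threshold \(N_k\), taken increasing in \(k\), so that \(\sigma_k(u_m - u_n) < 4^{-k}\) whenever \(m,n \ge N_k\). Setting \(n_1 = N_1\) and \(n_{k+1} = \max\{n_k+1,\,N_{k+1}\}\) gives a strictly increasing sequence of indices with \(n_k \ge N_k\) for every \(k\). Because \(n_k,n_{k+1} \ge N_k\), it follows that \(\sigma_k(u_{n_{k+1}} - u_{n_k}) < 4^{-k} = \varepsilon_k^2\), and hence, for every \(j \le k\),
\[\rho_j(u_{n_{k+1}} - u_{n_k}) \le \sigma_k(u_{n_{k+1}} - u_{n_k}) < \varepsilon_k^2.\]

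It remains to read off the rapid-Cauchy conclusion. For a fixed index \(j\), the inequality \(\rho_j(u_{n_{k+1}} - u_{n_k}) < \varepsilon_k^2\) holds for every \(k \ge j\); since the notion of rapid Cauchyness in Definition \ref{DefinitionofConvergences} constrains only the tail of the convergent series \(\sum_k \varepsilon_k\), this exhibits \(\{u_{n_k}\}\) as rapidly Cauchy with respect to each seminorm \(\rho_j\) individually, all governed by the one common series \(\sum_k \varepsilon_k\).

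The genuine obstacle — and the reason for the staircase — is that one cannot bound \(\rho_j(u_{n_{k+1}} - u_{n_k})\) by a fixed small number uniformly over all \(j\) at a fixed stage \(k\): for a nonzero fixed vector \(x\) the quantity \(\rho_j(x)\) need not be bounded in \(j\), so no single difference of distinct terms can be made uniformly small across the whole family. The resolution is precisely that at stage \(k\) we control only the first \(k\) seminorms, so the statement ``\(\rho_j(u_{n_{k+1}} - u_{n_k}) < \varepsilon_k^2\) for \(j,k \ge 1\)'' is to be understood as holding, for each fixed \(j\), from the index \(k=j\) onward. This is exactly what the downstream seminorm-by-seminorm application of Fatou's Lemma and Chebyshev's inequality needs, since those arguments depend only on the summability of the tail.
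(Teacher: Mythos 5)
Your construction is correct and reaches the same estimate as the paper by a genuinely different and more elementary route. The paper proves Lemma \ref{UniformRapidlyCauchy} by an iterated extraction --- a nested chain of subsequences, each controlling one additional seminorm as in \eqref{RapidCauchyFrechet1} --- followed by a diagonal argument; you instead replace the generating family by the increasing maxima \(\sigma_k = \max\{\rho_1,\dots,\rho_k\}\) and perform a single staircase extraction with explicit thresholds \(N_k\) and the explicit series \(\varepsilon_k = 2^{-k}\). Both constructions yield precisely the bound \(\rho_j(u_{n_{k+1}} - u_{n_k}) < \varepsilon_k^2\) for \(j \le k\); yours is shorter, avoids the bookkeeping of superscripted indices, and makes the limitation of the result visible rather than hidden.

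That limitation is the important point, and you have diagnosed it correctly: the conclusion as printed, with the bound holding for all \(j,k \ge 1\), is not just unproved but false in general. In the paper's own final step, the inequality \(\rho_j(f^{n^{k+i}_{k+i}} - f^{n^k_k}) < 2^{-k}\) is drawn from \eqref{RapidCauchyFrechet1} applied to the \(k\)-th subsequence, which is available only for \(1 \le j \le k\); the jump to ``for \(i,j,k \ge 1\)'' is exactly the gap. And no proof can close it: in the Fr\'echet space \(\mathbb{R}^{\mathbb{N}}\) with \(\rho_j(x) = \max_{i \le j}|x_i|\), the sequence defined by \((u_n)_i = 0\) for \(i \le n\) and \((u_n)_i = i\) for \(i > n\) converges to \(0\), yet for every subsequence one has \(\sup_j \rho_j(u_{n_{k+1}} - u_{n_k}) = n_{k+1} \ge k+1\), which no series with \(\varepsilon_k \to 0\) can dominate. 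So the lemma is only true in the form you prove it --- for each fixed \(j\) the bound holds from \(k = j\) onward, with one common series --- and, as you note, this tail version is all that the proof of Theorem \ref{AUHKisComplete} actually uses, since the Chebyshev/Borel--Cantelli and Fatou steps there depend only on summability of tails. One small tightening: Definition \ref{DefinitionofConvergences}(3) demands the estimate for every \(k\), so you should either say that the tail \(\{u_{n_k}\}_{k \ge j}\) is rapidly Cauchy in \((X,\rho_j)\), or enlarge the finitely many terms \(\varepsilon_1,\dots,\varepsilon_{j-1}\) (letting the series depend on \(j\)); either repair is harmless for the downstream arguments.
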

\begin{proof}
	We will exploit the fact that every subsequence of a Cauchy sequence in 
	\((X,\mathcal{P})\) is also Cauchy in \((X,\mathcal{P})\) and a diagonalization 
	process. For this proof, we write the index as a superscript. Let \(\{f^i\}\) be
	a given Cauchy sequence. We show that there a double sequence \(\{n^l_k\}\) of natural numbers such that for all \(l \geq 1\), if \(1 \leq j \leq l\) and \(k \geq 1\),
	\begin{equation}
		\label{RapidCauchyFrechet1}
		\rho_j \left(f^{n^l_{k+i}} - f^{n^l_k} \right) <\frac{1}{2^k} \, 
		\text{for} \, i \geq 1.
	\end{equation}

	Now, there exists a subsequence \(\{n^1_i\}\) such
	that for \(k \in \mathbb{N}\),
	\[\rho_1(f^{n^1_{k+i}} - f^{n^1_k}) < \frac{1}{2^k} \; \text{for all} \,
	i \geq 1.\]
	Moreover, \(\{f^{n^1_i}\}\) is a subsequence and so, it should also be Cauchy. In particular, there exists a subsequence \(\{f^{n^2_i}\}\) of \(\{f^{n^1_i}\}\) such that for \(k \in \mathbb{N}\),
	\[\rho_2(f^{n^2_{k+i}} - f^{n^2_k}) < \frac{1}{2^k} \; \text{for all} \, i \geq 1\]
	and
	\[\rho_1(f^{n^2_{k+i}} - f^{n^2_k}) < \frac{1}{2^k} \; \text{for all} \, i \geq 1.\]
	
	Suppose that for a natural number \(l\) we have constructed the sequence
	\(\{f^{n^l_{k}}\}_k\) satisfying \eqref{RapidCauchyFrechet1} for \(1 \leq j \leq l\). Then \(\{f^{n^l_k}\}\) is a Cauchy sequence in \(\rho_{l+1}\) since it is a subsequence of the Cauchy sequence \(f^n\) in \((X,\mathcal{P})\).
	
	For each \(k \geq 1\), there is a number \(N(l+1,k)>k\) such that
	\[\rho_{l+1}(f^{n^l_{p}} - f^{n^l_q}) < \frac{1}{2^k} \, \text{for all} \, p,q \geq N(l+1,k).\]
	We may choose \(N(l+1,k)\) to be increasing in \(k\). Then we let \(n^{l+1}_k = 
	n^l_{N(l+1,k)}\) for \(k\geq 1\). It follows that
	\[\rho_{l+1} \left(f^{n^{l+1}_{k+i}} - 
	f^{n^{l+1}_k}\right)<\frac{1}{2^k} \; \text{for} \; i \geq 1.\]
	Hence, for \(1 \leq j \leq l\), we have \(N(l+1,k) > N(l+1,k+i) >k\) and that
	\[\rho_j \left(f^{n^{l+1}_{k+i}} - f^{n^{l+1}_k} \right) = \rho_j 
	\left(f^{n^l_{N(l+1,k+i)}} - f^{n^l_{N(l+1,k)}}\right) <\frac{1}{2^k} \, 
	\text{for} \, i \geq 1.\]
	This proves the claim.
	
	Consider the subsequence \(\{f^{m_k}\} = \{f^{n^k_k}\}\). For \(k \geq 
	j\) and \(i \geq 1\), \(n^{k+i}_{k+i} = n^k_{P}\) for some \(P>k\).  It follows 
	that
	\[\rho_j (f^{n^{k+i}_{k+i}} - f^{n^k_k}) = \rho_j (f^{n^k_P} - 
	f^{n^k_k}) < \frac{1}{2^k}.\]
	In other words,
	\[\rho_j (f^{m_{k+i}} - f^{m_k}) < \frac{1}{2^k} \; \text{for} \; 1 
	\leq j \leq k \; \text{and} \; i\geq 1.\]
	Now that \(\varepsilon_k = \frac{1}{\sqrt{2^k}}<1\), the geometric series 
	\(\sum_{k=1}^\infty \varepsilon_k\) converges such that
	\[\rho_j (f^{m_{k+1}} - f^{m_k}) < \varepsilon_k^2 \; \text{for} \; 1 
	\leq j \leq k.\]
	This proves the lemma.
\end{proof}

The result above motivates a definition of rapidly Cauchy sequence for Fr\'echet spaces.

\begin{definition}
	Let \(X\) ne a Fr\'echet space. A sequence \(\{u_k\}\) in \(X\) is said to be a rapidly Cauchy if there exists a convergent series
	\[\sum_{k=1}^\infty \varepsilon_k\]
	with the property that for all \(j \geq 1\),
	\[\rho_j (f^{n_{k+1}} - f^{n_k}) < \varepsilon^2_k \; \text{for} \; k 
	\geq j.\]
\end{definition}

\begin{remark}
	For a given Fr\`echet space \((X,\mathcal{P})\) and \(1 \leq p < \infty\),
	the space \(\mathcal{U}^p([a,b];X)\) is the intersection of all the 
	spaces \(\mathcal{U}^p_{\rho}([a,b];X)\) for all \(\rho \in \mathcal{P}\). It follows that a rapidly Cauchy sequence in \(X\) is a rapidly Cauchy sequence in \((X,\rho)\) for any continuous semi-norm \(\rho\) on \(X\).
\end{remark}

\begin{theorem}
	\label{AUHKisComplete}
	Let \(X\) be a sequentially complete Fr\'echet space and \(1 \leq p < 
	+\infty\). The space \(\mathcal{U}^p([a,b];X)\) is a sequentially complete 
	Fr\'echet space.
\end{theorem}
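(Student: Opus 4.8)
The plan is to realize $\mathcal{U}^p([a,b];X)$ as a metrizable locally convex space and then establish its sequential completeness by reducing, through a single well-chosen subsequence, to the pointwise machinery already developed. By the preceding remark, $\mathcal{U}^p([a,b];X) = \bigcap_{\rho \in \mathcal{P}} \mathcal{U}^p_{\rho}([a,b];X)$, and by Lemma \ref{Uqisseminormed} each $\|\cdot\|_{\mathcal{U}^p_{\rho_i}}$ is a seminorm on this space. Hence its topology is generated by the countable family $\{\|\cdot\|_{\mathcal{U}^p_{\rho_i}} : i \in \mathbb{N}\}$, so $\mathcal{U}^p([a,b];X)$ is a metrizable locally convex space, and proving that it is Fr\'echet amounts to proving sequential completeness.

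So let $\{f_n\}$ be a Cauchy sequence in $\mathcal{U}^p([a,b];X)$. Since this space is a metrizable locally convex space with countable generating seminorms, the construction of Lemma \ref{UniformRapidlyCauchy} applies verbatim (its proof uses only the Cauchy property together with a diagonalization over the seminorms, never completeness), yielding a subsequence $\{f_{n_k}\}$ and a convergent series $\sum_k \varepsilon_k$ with $\|f_{n_{k+1}} - f_{n_k}\|_{\mathcal{U}^p_{\rho_j}} < \varepsilon_k^2$ for all $j,k \geq 1$. This is the decisive gain over the single-seminorm theorem: one subsequence is simultaneously rapidly Cauchy for every seminorm $\rho_j$.

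Next I would recover an almost-everywhere pointwise limit. For each fixed $j$ the displayed estimate lets me run the first half of the proof of Theorem \ref{CompletenessofUpperIntegrableFunctioninSemiNormedSpaces}: Chebyshev's inequality (Theorem \ref{Chebeshev}) bounds $m(\{x : \rho_j(f_{n_{k+1}}(x) - f_{n_k}(x)) \geq \varepsilon_k\})$ by $\varepsilon_k^p$, and the Borel--Cantelli lemma produces a null set $E^{(j)}_0$ off which $\{f_{n_k}(x)\}$ is $\rho_j$-Cauchy. Putting $E_0 = \bigcup_{j} E^{(j)}_0$, still a null set, I obtain that for every $x \notin E_0$ the sequence $\{f_{n_k}(x)\}$ is $\rho_j$-Cauchy for all $j$ at once, i.e. Cauchy in the full topology of $X$. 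Here the sequential completeness of the Fr\'echet space $X$ enters: $\{f_{n_k}(x)\}$ converges to some $f(x) \in X$ for a.e. $x$, and I extend $f$ by $0$ on $E_0$.

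Finally I would upgrade pointwise convergence to convergence in the space. Fixing $j$ and applying Fatou's Lemma (Theorem \ref{Fatou'sLemma}) to the sequence $\{f_{n_m} - f_{n_k}\}_m$, which converges a.e. to $f - f_{n_k}$, gives $\|f - f_{n_k}\|_{\mathcal{U}^p_{\rho_j}} \leq \liminf_{m} \|f_{n_m} - f_{n_k}\|_{\mathcal{U}^p_{\rho_j}} \leq \sum_{l \geq k} \varepsilon_l^2$ for every $j$; the case $k=1$ with the triangle inequality shows $f \in \mathcal{U}^p([a,b];X)$, and letting the convergent tail $\sum_{l \geq k} \varepsilon_l^2 \to 0$ shows $f_{n_k} \to f$ in each seminorm, hence in $\mathcal{U}^p([a,b];X)$. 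Since the Cauchy sequence $\{f_n\}$ possesses the convergent subsequence $\{f_{n_k}\}$, the observation recorded before Theorem \ref{CompletenessofUpperIntegrableFunctioninSemiNormedSpaces} forces $f_n \to f$, which completes the proof. The main obstacle, and the reason a slightly different argument is required, is exactly the mismatch above: $X$ is sequentially complete only in its full topology and need not be complete in any individual pseudometric $\rho_j$, so Theorem \ref{CompletenessofUpperIntegrableFunctioninSemiNormedSpaces} cannot be invoked one seminorm at a time; the remedy is to make a single subsequence rapidly Cauchy for all $\rho_j$ simultaneously and to amalgamate the countably many exceptional null sets before appealing to completeness of $X$.
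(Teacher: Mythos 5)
Your proof is correct and follows essentially the same route as the paper's: extract one subsequence that is rapidly Cauchy for all seminorms simultaneously via the diagonalization lemma, amalgamate the countably many Chebyshev/Borel--Cantelli null sets into a single null set, use sequential completeness of \(X\) in its full topology to get an a.e.\ pointwise limit, and finish with Fatou's Lemma and the subsequence principle for Cauchy sequences. If anything, your closing step is more careful than the paper's: you bound \(\|f - f_{n_k}\|_{\mathcal{U}^p_{\rho_j}}\) by the tail \(\sum_{l \geq k}\varepsilon_l^2\) to get genuine convergence in every seminorm, whereas the paper only records \(\|f\|_{\mathcal{U}^p_{\rho_i}} < +\infty\) before asserting that the subsequence converges.
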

\begin{proof}
	Note first that \(\{\|\cdot \|_{\mathcal{U}^p_{\rho_i}}\}_i\) forms a countable collection of semi-norms on 
	\(\mathcal{U}^p([a,b];X)\) where \( \mathcal{P} = \{\rho_i: i \in \mathbb{N}\}\) is the countable separating family of continuous semi-norms on \(X\). It remains to show that \(\mathcal{U}^p([a,b];X)\) 
	is sequentially complete.
	
	Let \(\{f_n\}\) be a Cauchy sequence in 
	\(\mathcal{U}^p([a,b];X)\). Then there is a subsequence, still denoted by
	\(n\), and a convergent series
	\[\sum_{n=1}^\infty \varepsilon_n\]
	such that for all \(i \geq 1\),
	\[\rho_i (f_{n_{k+1}} - f_{n_k}) < \varepsilon^2_k \; \text{for} \; 1 
	\leq i \leq k.\]
	We may proceed similar to the proof of Theorem 
	\ref{CompletenessofUpperIntegrableFunctioninSemiNormedSpaces} to show 
	that for each \(i\), there is \(E_{\rho_i} \subset [a,b]\) of measure zero such 
	that for all \(x \in [a,b] \setminus E_{\rho_i}\), \(\{f_n(x)\}\) is Cauchy in 
	\((X,\rho_i)\). Let \(E_0\) be the union of all the sets \(E_{\rho_i}\). Then 
	\(E_0\) has measure zero and for all \(x \in [a,b] \setminus E_0\) and \(i \geq 
	1\), \(\{f_n(x)\}\) is Cauchy in \((X,\rho_i)\) and so, \(\{f_n(x)\}\) is Cauchy
	in \((X,\mathcal{P})\). Since \((X,\mathcal{P})\) is sequentially complete, 
	there is a function \(f:[a,b] \rightarrow X\) which is a pointwise limit a.e. on
	\([a,b]\) of the sequence \(\{f_n\}\). In particular, \(f\) is a pointwise 
	limit of \(\{f_n\}\) in \((X,\rho_i)\) for each \(i\). Applying Theorem 
	\ref{Fatou'sLemma},
	\[\|f\|_{\mathcal{U}^p_{\rho_i}} < +\infty \; \text{for all} \; i \geq 1.\]
	Since a subsequence of \(\{f_n\}\) converges to \(f\) then \(\{f_n\}\) 
	also converges to \(f\).
\end{proof}

The next result follows from Theorems \ref{AUHKisComplete} and 
\ref{LpisClosedSubspace}.

\begin{corollary}
	Let \(X\) be a sequentially complete Fr\'echet space and \(1 \leq p < 
	+\infty\). The space \(L^p([a,b];X)\) is a closed subspace of 
	\(\mathcal{U}^p([a,b];X)\). Furthermore, \(L^p([a,b];X)\) is a sequentially 
	complete Fr\'echet space.
\end{corollary}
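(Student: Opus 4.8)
The plan is to deduce both assertions from the preceding single-seminorm theorem and from Theorem \ref{AUHKisComplete}, using the descriptions $L^p([a,b];X) = \bigcap_{\rho \in \mathcal{P}} L^p_\rho([a,b];X)$ and $\mathcal{U}^p([a,b];X) = \bigcap_{\rho \in \mathcal{P}} \mathcal{U}^p_\rho([a,b];X)$, where $\mathcal{P} = \{\rho_i : i \in \mathbb{N}\}$ is the defining countable family of continuous seminorms. By Theorem \ref{AUHKisComplete} the topology of $\mathcal{U}^p([a,b];X)$ is generated by the countable family $\{\|\cdot\|_{\mathcal{U}^p_{\rho_i}} : i \in \mathbb{N}\}$, so it will suffice to prove that $L^p([a,b];X)$ is closed; the ``furthermore'' clause then follows because a closed subspace of a sequentially complete Fr\'echet space is again a sequentially complete Fr\'echet space. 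Indeed, such a subspace inherits the countable generating family of seminorms, and any Cauchy sequence in it is Cauchy in $\mathcal{U}^p([a,b];X)$, hence converges by Theorem \ref{AUHKisComplete} to a limit that must lie in the closed set $L^p([a,b];X)$.

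For closedness I would take $f \in \mathcal{U}^p([a,b];X)$ together with a sequence $\{f_n\} \subseteq L^p([a,b];X)$ such that $\|f - f_n\|_{\mathcal{U}^p_{\rho_i}} \to 0$ for every $i$. Each $f_n$ is SKH integrable for the whole family $\mathcal{P}$, so it admits a single primitive $F_n : [a,b] \to X$, which we normalize by $F_n(a)=0$ and which is simultaneously a $\rho_i$-SKH primitive for every $i$. Fixing $t$, Proposition \ref{IntegralEstimateforp=1} applied to $f_m - f_n$ yields $\rho_i(F_m(t) - F_n(t)) \leq \int_a^t \rho_i(f_m - f_n) = \|f_m - f_n\|_{\mathcal{U}^1_{\rho_i}([a,t];X)}$, and \eqref{UpnormislessthanUq} of Lemma \ref{Uqisseminormed} bounds the right-hand side by $(b-a)^{1-\frac1p}\,\|f_m - f_n\|_{\mathcal{U}^p_{\rho_i}([a,b];X)}$, which tends to $0$.

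Here lies the main point, and the step that needs more care than a naive ``apply the previous theorem to each $\rho_i$ and intersect.'' The individual pseudo-metric spaces $(X,\rho_i)$ need not be sequentially complete, so one cannot conclude seminorm-by-seminorm that the $\rho_i$-Cauchy sequence $\{F_n(t)\}$ converges. What rescues the argument is that the convergence of $\{f_n\}$ takes place in the full topology of $\mathcal{U}^p([a,b];X)$: by the estimate above $\{F_n(t)\}$ is Cauchy with respect to \emph{every} seminorm $\rho_i$ at once, hence Cauchy in the Fr\'echet topology of $X$, and therefore convergent to a unique $F(t) \in X$ by the \emph{global} sequential completeness of $X$. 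This defines $F : [a,b] \to X$ with $\rho_i(F(t) - F_n(t)) \to 0$ for every $i$ and every $t$, and it is exactly why the hypothesis should be global sequential completeness of $X$ rather than completeness of each $(X,\rho_i)$.

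Finally I would verify that $F$ is an SKH primitive of $f$, repeating for each fixed $i$ the computation of the preceding theorem. Using the uniform convergence $F_n(t)\to F(t)$ in $\rho_i$, one shows first that $F(t)$ is a $\rho_i$-KH integral of $f$ on $[a,t]$, and then, via Proposition \ref{IntegralEstimateforforHKIntegral}, \eqref{Additivityofintegral}, and Theorem \ref{Additivityofvariationalmeasureoverintervals}, that $\mu_{\rho_i}(\Delta F - \Delta F_n, [a,b]) \leq \mu_{\rho_i}(\Theta(f - f_n),[a,b])$, whence
\[
\mu_{\rho_i}(\Delta F - \Theta f, [a,b]) \leq 2\,\mu_{\rho_i}(\Theta(f_n - f),[a,b]) = 2\,\|f - f_n\|_{\mathcal{U}^1_{\rho_i}} \to 0.
\]
Thus $\mu_{\rho_i}(\Delta F - \Theta f,[a,b]) = 0$ for every $i$, so by Remark \ref{SKHIntegrability} $f$ is $\rho_i$-SKH integrable for every $i$; since also $f \in \mathcal{U}^p([a,b];X)$, the identity $L^p_{\rho_i} = \mathcal{U}^p_{\rho_i} \cap SKH_{\rho_i}$ gives $f \in L^p_{\rho_i}([a,b];X)$ for all $i$, that is, $f \in L^p([a,b];X)$. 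This establishes that $L^p([a,b];X)$ is closed, and the completeness and Fr\'echet assertions follow as indicated in the first paragraph.
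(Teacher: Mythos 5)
Your proof is correct, but it is not the immediate deduction the paper intends for this corollary. The paper gives no proof at all; the implicit route is to apply the preceding (unnumbered) closedness theorem to each seminorm \(\rho_i \in \mathcal{P}\) and then quote Theorem \ref{AUHKisComplete} for completeness. You rightly refuse this shortcut: the preceding theorem assumes \((X,\rho)\) is sequentially complete, and a sequentially complete Fr\'echet space need \emph{not} be sequentially complete with respect to an individual seminorm \(\rho_i\). (Concretely, for the Fr\'echet space \(H(\mathbb{C})\) of entire functions with \(\rho_n(g)=\sup_{|z|\leq n}|g(z)|\), the partial sums of \(\sum_n z^n/n^2\) are \(\rho_1\)-Cauchy but have no \(\rho_1\)-limit in \(H(\mathbb{C})\), since the dilogarithm has no entire extension.) So the hypothesis of the per-seminorm theorem is genuinely unavailable, and your repair — deriving \(\rho_i(F_m(t)-F_n(t)) \leq \|f_m-f_n\|_{\mathcal{U}^1_{\rho_i}}\) for \emph{every} \(i\) at once via Proposition \ref{IntegralEstimateforp=1} and \eqref{UpnormislessthanUq}, concluding that \(\{F_n(t)\}\) is Cauchy in the full Fr\'echet topology, and invoking the global sequential completeness of \(X\) to produce a single \(F(t)\) — is exactly the right fix, and it mirrors how Theorem \ref{AUHKisComplete} itself handles pointwise limits of \(\{f_n(x)\}\). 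The remainder (verification that \(F\) is a \(\rho_i\)-primitive of \(f\), the bound \(\mu_{\rho_i}(\Delta F-\Theta f,[a,b])\leq 2\mu_{\rho_i}(\Theta(f_n-f),[a,b])\), and deducing the ``furthermore'' clause from closedness plus Theorem \ref{AUHKisComplete}) faithfully reproduces the paper's preceding proof seminorm by seminorm and is sound. In short: the paper's implicit argument is a one-liner with a real gap; yours is longer but actually closes it.

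One small caveat. You assert that each \(f_n \in L^p([a,b];X)\) admits a \emph{single} primitive \(F_n\) that is simultaneously a \(\rho_i\)-SKH primitive for all \(i\). This matches the paper's own convention (its definition of an SKH-primitive presupposes such a common \(F\)), but strictly the definition of \(L^p\) only provides, for each seminorm, some seminorm-dependent primitive. The common primitive does exist here, by the same device you use later: take \(\rho_i\)-primitives normalized at \(a\) for the increasing seminorms \(\max_{j\leq i}\rho_j\), observe they agree modulo \(\ker\rho_j\) for \(j \leq i\), and use sequential completeness of \(X\) to pass to a limit. Alternatively you can sidestep it entirely by carrying \(\rho_i\)-dependent primitives \(F_n^{(i)}\) and limits \(F^{(i)}\) through your argument, since membership in \(L^p\) only requires \(\rho_i\)-SKH integrability for each \(i\) separately. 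Either way the proof stands; the assertion just deserves a sentence of justification.
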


\section{Conclusion}

In this paper, we developed the framework of $\rho$-upper integrability for 
functions with values in semi-normed and locally convex topological vector 
spaces. By introducing the spaces $\mathcal{U}^p_{\rho}$ and their associated 
semi-norms, we established a foundation that extends classical notions of 
absolute integrability to a broader functional-analytic setting.

We proved the sequential completeness of $\mathcal{U}^p_{\rho}([a,b];X)$ 
whenever the target space $X$ is sequentially complete, and demonstrated that 
the absolutely integrable subspace $L^p_{\rho}([a,b];X)$ is closed inside 
$\mathcal{U}^p_{\rho}([a,b];X)$.

The techniques presented here suggest several directions for further study. 
Among them are the extension of $\rho$-upper integrability to broader classes 
of vector-valued functions, the investigation of duality and operator theory 
in this setting, and possible applications to integration in infinite-dimensional 
spaces. Such developments may help unify approaches to non-classical integrals 
within a coherent functional-analytic theory.

\backmatter

%
%

\section*{Declarations}

The author declare that no funds, grants, or other support were received during the preparation of this manuscript.

\bibliography{sn-bibliography}

\end{document}